\title[Traces on self-similar groupoid $C^*$-algebras]{Preferred traces on $C^*$-algebras of self-similar groupoids arising as fixed points}
\author{Joan Claramunt}
\address[J. Claramunt]{Department of Mathematics\\
Universitat Aut\`onoma de Barcelona\\
 08193 Bellaterra (Barcelona)}
\email{jclaramunt@mat.uab.cat}
\author{Aidan Sims}
\address[A. Sims]{School of Mathematics and Applied Statistics\\
University of Wollongong\\
NSW 2522, Australia} \email{asims@uow.edu.au}
\keywords{$C^*$-algebra; self-similar group; self-similar groupoid; KMS state; trace; fixed point}
\subjclass{46L05}
\thanks{This research was supported by the Australian Research Council grant DP150101595. It also benefited
significantly from support from the Intensive Research Program \emph{Operator algebras: dynamics and
interactions} at the Centre de Recerca Matem\`atica in Barcelona. The first-named author was partially
supported by DGI-MINECO-FEDER through the grants MTM2014-53644-P, MTM2017-83487-P and BES-2015-071439.}
\date{\today}
\theoremstyle{plain}
\newtheorem{theor}{Theorem}[section]
\newtheorem{prop}[theor]{Proposition}
\newtheorem{lema}[theor]{Lemma}
\theoremstyle{definition}
\newtheorem{defi}[theor]{Definition}
\newtheorem{nota}[theor]{Notation}
\theoremstyle{remark}
\numberwithin{equation}{section}
\newcommand{\Tr}{\operatorname{Tr}}
\begin{document}

\begin{abstract}
Recent results of Laca, Raeburn, Ramagge and Whittaker show that any self-similar action
of a groupoid on a graph determines a 1-parameter family of self-mappings of the trace
space of the groupoid $C^*$-algebra. We investigate the fixed points for these
self-mappings, under the same hypotheses that Laca et al{.} used to prove that the
$C^*$-algebra of the self-similar action admits a unique KMS state. We prove that for any
value of the parameter, the associated self-mapping admits a unique fixed point, which is
a universal attractor. This fixed point is precisely the trace that extends to a KMS
state on the $C^*$-algebra of the self-similar action.
\end{abstract}

\maketitle

There has been a lot of recent interest in the structure of KMS states for the natural
gauge actions on $C^*$-algebras associated to algebraic and combinatorial objects (see,
for example, \cite{AaHR, CL, CT, FGKP, aHLRS1, aHLRS4, IK, Kak, Tho}). The theme is that
there is a critical inverse temperature below which the system admits no KMS states, and
above this critical inverse temperature the structure of the KMS simplex reflects some of
the underlying combinatorial data. For example, for $C^*$-algebras of strongly-connected
finite directed graphs, the critical inverse temperature is the logarithm of the spectral
radius of the graph, there is a unique KMS state at this inverse temperature, and at
supercritical inverse temperatures the extreme KMS states are parameterised by the
vertices of the graph \cite{EFW, aHLRS1}.

A particularly striking instance of this phenomenon appeared recently in the context of
$C^*$-algebras associated to self-similar groups \cite{Nek, LRRW} and, more generally,
self-similar actions of groupoids on graphs \cite{LRRW2}. Roughly speaking a self-similar
action of a groupoid on a finite directed graph $E$ consists of a discrete groupoid
$\mathcal{G}$ with unit space identified with $E^0$, and an action of $\mathcal{G}$ on
the left of the path-space of $E$ with the property that for each groupoid element $g$
and each path $\mu$ for which $g \cdot \mu$ is defined, there is a unique groupoid
element $g|_\mu$ such that $g\cdot (\mu\nu) = (g\cdot\mu)(g|_\mu \cdot \nu)$ for any
other path $\nu$.

In \cite{LRRW2}, the authors first show that at supercritical inverse temperatures, the
KMS states on the Toeplitz algebra $\mathcal{T}(\mathcal{G}, E)$ of the self-similar
action are determined by their restrictions to the embedded copy of $C^*(\mathcal{G})$.
They then show that the self-similar action can be used to transform an arbitrary trace
on $C^*(\mathcal{G})$ into a new trace that extends to a KMS state, and that this
transformation is an isomorphism of the trace simplex of $C^*(\mathcal{G})$ onto the
KMS-simplex of $\mathcal{T}(\mathcal{G}, E)$. The transformation is quite natural: given
a trace $\tau$ on $C^*(\mathcal{G})$ and given $g \in \mathcal{G}$, the value of the
transformed trace at the generator $u_g$ is a weighted infinite sum of the values of the
original trace on restrictions $g|_\mu$ of $g$ such that $g \cdot \mu = \mu$; so the
transformed trace at $u_g$ reflects the proportion---as measured by the initial
trace---of the path-space of $E$ that is fixed by $g$. Building on this analysis, Laca,
Raeburn, Ramagge and Whittaker proved that if $E$ is strongly connected and the
self-similar action satisfies an appropriate finite-state condition, then
$\mathcal{T}(\mathcal{G}, E)$ admits a unique KMS state at the critical inverse
temperature and this is the only state that factors through the quotient
$\mathcal{O}(\mathcal{G}, E)$ determined by the Cuntz--Krieger relations for $E$. So the
KMS structure picks out a ``preferred trace" on the groupoid $C^*$-algebra
$C^*(\mathcal{G})$. Some enlightening examples of this are discussed in
\cite[Section~9]{LRRW2}.

This paper is motivated by the observation that the transformation described in the
preceding paragraph for a given inverse temperature $\beta$ is a self-mapping
$\chi_\beta$ of the simplex of normalised traces of $C^*(\mathcal{G})$, and so can be
iterated. This raises a natural question: for which initial traces $\tau$ and at which
supercritical inverse temperatures does the sequence $(\chi_\beta^n(\tau))^\infty_{n=1}$
converge, and what information about the self-similar action do the limit traces---that
is, the fixed points for $\chi_\beta$---encode? Our main result, Theorem~\ref{thm:main},
gives a very satisfactory answer to this question: the hypotheses of \cite{LRRW2} (namely
that $E$ is strongly connected and the action satisfies the finite-state condition) seem
to be exactly the hypotheses needed to guarantee that $\chi_\beta$ admits a unique fixed
point for every supercritical $\beta$, that this fixed point is a universal attractor,
and that it is precisely the preferred trace that extends to a KMS state at the critical
inverse temperature.

\section{Preliminaries}

\subsection{KMS states}
Consider a $C^*$-algebra $A$ together with a strongly continuous homomorphism $\alpha :
\mathbb{R} \to \operatorname{Aut}(A)$. An element $x \in A$ is called \emph{analytic} if
the function $t \mapsto \alpha_t(x)$ extends to an analytic function from $\mathbb{C}$ to
$A$. The set $A^a$ of analytic elements is a dense $*$-subalgebra of $A$ (see for example
\cite[Chapter~8]{Ped}).

We say that a state $\phi$ of $A$ satisfies the Kubo--Martin--Schwinger (KMS) condition
at inverse temperature $\beta \in (0,\infty)$ if it satisfies
\begin{equation*}
 \phi(xy) = \phi(y\alpha_{i\beta}(x))\quad\text{ for all analytic $x,y \in A$}.
\end{equation*}
We call such a state $\phi$ a \emph{KMS$_\beta$ state} for $(A, \alpha)$. It is
well-known that a state $\phi$ is KMS$_\beta$ if and only if there exists a set $S$ of
analytic elements such that $\operatorname{span} S$ is an $\alpha$-invariant dense
subspace of $A$, and $\phi$ satisfies the KMS condition at all $x,y \in S$.

\subsection{Self-similar groupoids}

A \emph{groupoid} is a countable small category $\mathcal{G}$ with inverses. In this
paper, we will use $d$ and $t$ for the domain and terminus maps $\mathcal{G} \to
\mathcal{G}^{(0)}$ to distinguish them from the range and source maps on directed graphs.
For $u \in \mathcal{G}^{(0)}$, we write $\mathcal{G}_u = \{g \in \mathcal{G} : d(g) =
u\}$ and $\mathcal{G}^u = \{g \in \mathcal{G} : t(g) = u\}$.

Consider a finite directed graph $E = (E^0,E^1,r,s)$. For $n \ge 2$, write $E^n$ for the
paths of length $n$ in $E$; that is $E^n = \{e_1 e_2 \dots e_n : e_i \in E^1, r(e_{i+1})
= s(e_i)\}$. We write $E^* := \bigcup_{n = 1}^\infty E^n$. We can visualise the set $E^*$
as indexing the vertices of a forest $T = T_E$ given by $T^0 = E^*$ and $T^1 = \{ (\mu,
\mu e) \in E^* : \mu \in E^*, e \in E^1 \text{ and } s(\mu) = r(e) \}$. Throughout this
paper, we write $A_E$ for the integer matrix with entries $A_E(v,w) = |v E^1 w|$.

We are interested in self-similar actions of groupoids on directed graphs $E$ as
introduced and studied in \cite{LRRW2}. To describe these, first recall that a
\emph{partial isomorphism} of the forest $T_E$ corresponding to a directed graph $E$ as
above consists of a pair $(v,w) \in E^0 \times E^0$ and a bijection $g : vE^* \to wE^*$
such that
\begin{enumerate}
    \item $g|_{v E^k} : vE^k \to wE^k$ is bijective for $k \geq 1$.
    \item $g(\mu e) \in g(\mu) E^1$ for $\mu \in vE^*$ and $e \in E^1$ with $r(e) =
        s(\mu)$.
\end{enumerate}

The set of partial isomorphisms of $T_E$ forms a groupoid $\operatorname{PIso}(T_E)$ with
unit space $E^0$ \cite[Proposition 3.2]{LRRW2}: the identity morphism associated to $v
\in E^0$ is the partial isomorphism $\text{id}_v : vE^* \to vE^*$ given by the identity
map on $vE^*$; the inverse of $g : vE^* \to wE^*$ is the standard inverse map $g^{-1} :
wE^* \to vE^*$; and the groupoid multiplication is composition.

\begin{defi}[{\cite[Definition~3.3]{LRRW}}]
Let $E$ be a directed graph, and let $\mathcal{G}$ be a groupoid with unit space $E^0$. A
\emph{faithful action} of $\mathcal{G}$ on $T_E$ is an injective groupoid homomorphism
$\theta : \mathcal{G} \to \operatorname{PIso}(T_E)$ that is the identity map on unit
spaces. We write $g \cdot \mu$ rather than $\theta_g(\mu)$ for $g \in \mathcal{G}$ and
$\mu \in E^*$ with $d(g) = r(\mu)$. The action $\theta$ is \emph{self-similar} if for
each $g \in \mathcal{G}$ and $\mu \in d(g)E^*$ there exists $g|_{\mu} \in \mathcal{G}$
such that $d(g|_\mu) = s(\mu)$ and
\begin{equation}\label{eq:restriction}
    g \cdot (\mu \nu) = (g \cdot \mu) (g|_{\mu} \cdot \nu) \quad \text{ for all } \nu \in s(\mu)E^*.
\end{equation}
\end{defi}

The faithfulness condition ensures that for each $g \in \mathcal{G}$ and $\mu \in E^*$
with $d(g) = r(\mu)$, there is a \emph{unique} element $g|_\mu \in \mathcal{G}$
satisfying~\eqref{eq:restriction}. Throughout the paper, we will write $\mathcal{G}
\curvearrowright E$ to indicate that the groupoid $\mathcal{G}$ acts faithfully on the
directed graph $E$.

By \cite[Proposition~3.6]{LRRW2}, self-similar groupoid actions have the following
properties, which we will use without comment henceforth: for $g,h \in \mathcal{G}$, $\mu
\in d(g)E^*$, and $\nu \in s(\mu)E^*$,
\begin{enumerate}
 \item $g|_{\mu \nu} = (g|_{\mu})|_{\nu}$,
 \item $\text{id}_{r(\mu)}|_{\mu} = \text{id}_{s(\mu)}$,
 \item if $(h,g) \in \mathcal{G}^{(2)}$, then $\big(h|_{g \cdot \mu}, g|_\mu\big) \in
     \mathcal{G}^{(2)}$, and $(hg)|_{\mu} = h|_{g \cdot \mu} g|_{\mu}$, and
 \item $(g^{-1})|_{\mu} = ( g|_{g^{-1} \cdot \mu})^{-1}$.
\end{enumerate}

We say that a self-similar action $\mathcal{G} \curvearrowright E$ is \emph{finite-state}
if for every element $g \in \mathcal{G}$, the set $\{g|_\mu : \mu \in d(g)E^*\}$ is a
finite subset of $\mathcal{G}$.

\subsection{The \texorpdfstring{$C^*$}{C*}-algebras of a self-similar
groupoid}\label{sec:C*-alg of ssa}

The Toeplitz algebra of a self-similar action $\mathcal{G} \curvearrowright E$ is defined
in \cite{LRRW2} as follows. A \emph{Toeplitz representation} $(v,q,t)$ of
$(\mathcal{G},E)$ in a unital $C^*$-algebra $B$ is a triple of maps $v : \mathcal{G} \to
B$, $q : E^0 \to B$, $t: E^1 \to B$ such that
\begin{enumerate}
 \item $(q,t)$ is a Toeplitz--Cuntz--Krieger family in $B$ such that $\sum_{w \in
     E^0} q_w = 1_B$;
 \item $\{v_g : g \in \mathcal{G}\}$ is a family of partial isometries in $B$
     satisfying $v_g v_h = \delta_{d(g), t(h)} v_{gh}$ and $v_{g^{-1}} = v_g^*$ for
     all $g,h \in \mathcal{G}$, and $v_{w} = q_w$ for $w \in \mathcal{G}^{(0)} =
     E^0$;
 \item $v_g t_e = \delta_{d(g),r(e)} t_{g \cdot e} v_{g|_e}$ for $g \in \mathcal{G}$
     and $e \in E^1$; and
 \item $v_g q_w = \delta_{d(g),w} q_{g \cdot w} v_g$ for all $g \in \mathcal{G}$ and
     $w \in E^0$.
\end{enumerate}

Standard arguments show that there exists a universal $C^*$-algebra
$\mathcal{T}(\mathcal{G},E)$ generated by a Toeplitz representation $\{u,p,s\}$. We have
$\mathcal{T}(\mathcal{G},E) = \overline{\text{span}} \{ s_{\mu} u_g s_{\nu}^* : \mu,\nu
\in E^*, g \in \mathcal{G}^{s(\mu)}_{s(\nu)}\}$. We call $\mathcal{T}(\mathcal{G}, E)$
the \emph{Toeplitz algebra} of the self-similar action $\mathcal{G} \curvearrowright E$.
The argument of the paragraph following \cite[Theorem~6.1]{LRRW2} applied with $\pi_\tau$
replaced by a faithful representation of $C^*(\mathcal{G})$ shows that $C^*(\mathcal{G})$
embeds in $\mathcal{T}(\mathcal{G},E)$ as a unital $C^*$-subalgebra via an embedding
satisfying $\delta_g \mapsto u_g$.

Following \cite[Proposition 4.7]{LRRW2}, the \emph{Cuntz--Pimsner} algebra of
$(\mathcal{G}, E)$, denoted $\mathcal{O}(\mathcal{G}, E)$, is defined to be the quotient
of $\mathcal{T}(\mathcal{G},E)$ by the ideal $I$ generated by $\big\{p_v - \sum_{e \in
vE^1} s_e s_e^* : v \in E^0\big\}$. We have $1_{\mathcal{O}(\mathcal{G}, E)} = \sum_{ \mu
\in E^n} s_{\mu} s_{\mu}^*$ for any $n$.

\subsection{Dynamics on \texorpdfstring{$\mathcal{T}(\mathcal{G}, E)$}{T(G, E)} and \texorpdfstring{$\mathcal{O}(\mathcal{G}, E)$}{O(G, E)}}
The universal property of $\mathcal{T}(\mathcal{G},E)$ yields a dynamics
$\sigma : \mathbb{R} \to \operatorname{Aut}(\mathcal{T}(\mathcal{G},E))$ such that
\[
\sigma_t(u_g) = u_g,\quad \sigma_t(q_w) = q_w,\quad\text{ and }\quad \sigma_t(t_e) = e^{it}t_e
\]
for all $t\in \mathbb{R}$, $g \in \mathcal{G}$, $w \in E^0$, and $e \in E^1$. Since each
$p_v - \sum_{e \in vE^1} s_e s_e^*$ is fixed by $\sigma$, the dynamics $\sigma$ descends
to a dynamics, also denoted $\sigma$, on $\mathcal{O}(\mathcal{G}, E)$.

Let $\rho(A_E)$ denote the spectral radius of the adjacency matrix $A_E$. Proposition~5.1
of \cite{LRRW2} shows that there are no KMS$_\beta$ states on $(\mathcal{T}(\mathcal{G},
E), \sigma)$ for $\beta < \log\rho(A_E)$. In \cite[Theorem 6.1]{LRRW2}, given a trace
$\tau$ on the groupoid algebra $C^*(\mathcal{G})$, the authors show that for $\beta >
\log\rho(A_E)$, the series
\begin{equation*}
    Z(\beta,\tau) := \sum_{k=0}^{\infty} e^{-\beta k} \sum_{\mu \in E^k} \tau(u_{s(\mu)})
\end{equation*}
converges to a positive real number, and that there is a KMS$_\beta$ state
$\Psi_{\beta,\tau}$ on the Toeplitz algebra $\mathcal{T}(\mathcal{G},E)$ given by
\begin{equation}\label{eq:Psi-beta-tau}
\Psi_{\beta,\tau}( s_{\mu} u_g s_{\nu}^*)
    = \delta_{\mu,\nu} e^{-\beta|\mu|} Z(\beta,\tau)^{-1} \sum_{k=0}^{\infty} e^{-\beta k}
        \Big( \sum_{\lambda \in s(\mu) E^k,\, g \cdot \lambda = \lambda} \tau(u_{g|_{\lambda}}) \Big).
\end{equation}
They show that the map $\tau \mapsto \Psi_{\beta, \tau}$  is an isomorphism from the
simplex of tracial states of $C^*(\mathcal{G})$ to the KMS$_\beta$-simplex of
$\mathcal{T}(\mathcal{G},E)$.

\section{A fixed-point theorem, and the preferred trace on \texorpdfstring{$C^*(\mathcal{G})$}{C*(G)}}

Consider a self-similar action $\mathcal{G} \curvearrowright E$ and a number $\beta >
\log\rho(A_E)$. As mentioned in Section~\ref{sec:C*-alg of ssa}, $C^*(\mathcal{G})$ is a
unital $C^*$-subalgebra of $\mathcal{T}(\mathcal{G}, E)$. The starting point for our
analysis is that if $\tau$ is a trace on $C^*(\mathcal{G})$ and $\Psi_{\beta,\tau}$ is
the associated KMS$_\beta$-state of $\mathcal{T}(\mathcal{G}, E)$ given
by~\eqref{eq:Psi-beta-tau}, then $\Psi_{\beta,\tau}|_{C^*(\mathcal{G})}$ is again a trace
on $C^*(\mathcal{G})$. So there is a mapping $\chi_\beta : \Tr(C^*(\mathcal{G})) \to
\Tr(C^*(\mathcal{G}))$ given by
\begin{equation}\label{eq:chi}
\chi_\beta(\tau) = \Psi_{\beta,\tau}|_{C^*(\mathcal{G})}.
\end{equation}

Our main theorem is the following; its proof occupies the remainder of the paper.

\begin{theor}\label{thm:main}
Let $E$ be a finite strongly connected graph, suppose that $\mathcal{G} \curvearrowright
E$ is a faithful self-similar action of a groupoid $\mathcal{G}$ on $E$, and suppose that
$\beta > \log\rho(A_E)$. If $\mathcal{G} \curvearrowright E$ is finite state, then
\begin{enumerate}
    \item the map $\chi_\beta :  \Tr(C^*(\mathcal{G})) \to  \Tr(C^*(\mathcal{G}))$
        of~\eqref{eq:chi} has a unique fixed point $\theta$;
    \item for any $\tau \in \Tr(C^*(\mathcal{G}))$ we have $\chi^n_\beta(\tau)
        \stackrel{w^*}{\to} \theta$; and
    \item $\theta$ is the unique trace on $C^*(\mathcal{G})$ that extends to a
        KMS$_{\log\rho(A_E)}$-state of $\mathcal{T}(\mathcal{G},E)$.
\end{enumerate}
\end{theor}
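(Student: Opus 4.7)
The proof naturally splits into two parts: establishing (i) and (ii) by a direct dynamical analysis of $\chi_\beta$, and then identifying the unique fixed point with the restriction of the unique critical KMS state for part (iii). For (i) and (ii) the plan is to decompose each trace $\tau \in \Tr(C^*(\mathcal{G}))$ into a ``vertex part'' $\vec{m}_\tau := (\tau(p_v))_{v \in E^0}$ and an ``isotropy part'' consisting of the values $\tau(u_g)$ for $g \in \mathcal{G}$ with $d(g) = t(g)$ and $g \notin E^0$, and to analyse each separately.

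For the vertex part, a direct computation from~\eqref{eq:Psi-beta-tau} (using $v \cdot \lambda = \lambda$ and $v|_\lambda = s(\lambda)$) shows that $\vec{m}_{\chi_\beta(\tau)}$ depends only on $\vec{m}_\tau$ and equals $(\mathbf{1}^T M \vec{m}_\tau)^{-1} M \vec{m}_\tau$, where $M := (I - e^{-\beta} A_E)^{-1} = \sum_{k \ge 0} e^{-\beta k} A_E^k$. Since $E$ is strongly connected, $M$ has strictly positive entries, so by the Perron--Frobenius theorem the induced normalized iteration on the vertex simplex has a unique attracting fixed point, the normalized Perron--Frobenius eigenvector $\vec{m}_c$ of $A_E$ (which is also the leading eigenvector of $M$), with geometric convergence controlled by the spectral gap of $M$. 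For the isotropy part, the finite-state hypothesis gives that for each $g \in \mathcal{G}$ the set $S_g := \{g|_\lambda : \lambda \in d(g)E^*\}$ is finite, and because $(g|_\lambda)|_\mu = g|_{\lambda\mu}$ it is closed under further restriction; consequently $\chi_\beta^n(\tau)(u_g)$ depends on $\tau$ only through $\{\tau(u_h) : h \in S_g\}$ together with $\vec{m}_\tau$, reducing the analysis to a finite-dimensional problem. Existence of a fixed point then follows from a standard fixed-point theorem applied to $\chi_\beta$ on the (weak-$*$ compact, convex) trace simplex. For uniqueness and universal attraction, the main technical step is to show that once $\vec{m}_\tau$ is close to $\vec{m}_c$ the map $\chi_\beta$ on the finite-dimensional isotropy piece is a strict contraction: the denominator $Z(\beta,\tau)$ is then approximately the constant $(1 - e^{-\beta}\rho(A_E))^{-1}$, so $\chi_\beta$ is approximately linear, and faithfulness of the action supplies the contraction---any non-identity isotropy element $g$ has some $\lambda_0$ with $g \cdot \lambda_0 \ne \lambda_0$, and by strong connectivity and the self-similar recursion $\#\{\lambda \in vE^k : g \cdot \lambda = \lambda\} < |vE^k|$ for all sufficiently large $k$, yielding a contraction factor $c < 1$. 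Combined with the Perron--Frobenius convergence on the vertex coordinate, this gives uniqueness of the fixed point $\theta$ and weak-$*$ convergence $\chi_\beta^n(\tau) \to \theta$ for every initial $\tau$.

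For (iii), let $\Psi_c$ be the unique KMS$_{\log\rho(A_E)}$-state of $\mathcal{T}(\mathcal{G}, E)$ (established in~\cite{LRRW2}) and set $\tau_c := \Psi_c|_{C^*(\mathcal{G})}$. By~\cite{LRRW2}, $\Psi_c$ factors through $\mathcal{O}(\mathcal{G}, E)$, so the Cuntz--Krieger relation $p_v = \sum_{e \in vE^1} s_e s_e^*$ holds under $\Psi_c$. For $g \in \mathcal{G}$ with $d(g) = t(g) = v$ we write
\[
u_g = u_g p_v = \sum_{e \in vE^1} u_g s_e s_e^* = \sum_{e \in vE^1} s_{g \cdot e}\, u_{g|_e}\, s_e^*
\]
and apply the KMS condition at $\log\rho(A_E)$, using $\sigma_{i\log\rho(A_E)}(s_{g\cdot e}) = \rho(A_E)^{-1} s_{g\cdot e}$ and $s_e^* s_{g\cdot e} = \delta_{e, g\cdot e}\, p_{s(e)}$, to obtain the recursion
\[
\tau_c(u_g) = \rho(A_E)^{-1} \sum_{e \in vE^1,\, g \cdot e = e} \tau_c(u_{g|_e}),
\]
which iterates (using $(g|_\lambda)|_\mu = g|_{\lambda\mu}$ and the fact that $g \cdot (\lambda\mu) = \lambda\mu$ iff $g\cdot\lambda = \lambda$ and $g|_\lambda \cdot \mu = \mu$) to
\[
\tau_c(u_g) = \rho(A_E)^{-k} \sum_{\lambda \in vE^k,\, g \cdot \lambda = \lambda} \tau_c(u_{g|_\lambda})
\]
for every $k \ge 0$. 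Applied to units $g = v$, the same recursion gives $A_E \vec{m}_{\tau_c} = \rho(A_E) \vec{m}_{\tau_c}$, so $\vec{m}_{\tau_c} = \vec{m}_c$ and hence $Z(\beta, \tau_c) = (1 - e^{-\beta} \rho(A_E))^{-1}$. Substituting the iterated recursion into~\eqref{eq:Psi-beta-tau} and summing the resulting geometric series in $e^{-\beta}\rho(A_E)$ yields $\chi_\beta(\tau_c)(u_g) = \tau_c(u_g)$, so $\chi_\beta(\tau_c) = \tau_c$. The uniqueness in (i) forces $\theta = \tau_c$, and since $\Psi_c$ is the unique critical KMS state on $\mathcal{T}(\mathcal{G}, E)$, this $\theta$ is the unique trace on $C^*(\mathcal{G})$ extending to a KMS$_{\log\rho(A_E)}$-state.

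The main obstacle I anticipate is the strict-contraction estimate on the isotropy part. The contraction must hold uniformly in $g$ (or at least uniformly on each finite piece $S_g$) and must combine cleanly with the simultaneous, still-in-progress, convergence $\vec{m}_\tau \to \vec{m}_c$; carrying this out will likely require a spectral-gap analysis of the compound linear iteration on the finite-dimensional subspace associated to each generator, with the contraction factor made quantitative in terms of the faithfulness defect (the smallest $k$ for which $g$ fails to fix some path in $vE^k$) and the Perron--Frobenius gap of $A_E$.
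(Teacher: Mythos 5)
Your route is essentially the paper's: the vertex computation $x^{\chi_\beta(\tau)}=\|A_{vN}x^\tau\|_1^{-1}A_{vN}x^\tau$ with $A_{vN}=(I-e^{-\beta}A_E)^{-1}$ and the Perron--Frobenius convergence are exactly Proposition~\ref{prop:formula_for_tau_vertices} and Theorem~\ref{theor:converging_to_PF}; the finite-state reduction to the finite set $\{g|_\mu:\mu\in d(g)E^*\}$ together with an (inhomogeneous) contraction estimate is the content of Theorem~\ref{limit_trace}; and your part~(iii) computation --- deriving $\tau_c(u_g)=\rho(A_E)^{-1}\sum_{e\in vE^1,\,g\cdot e=e}\tau_c(u_{g|_e})$ from the Cuntz--Krieger relation and the KMS condition, iterating it, and summing the geometric series --- is precisely how the paper verifies that the restriction of the critical KMS state satisfies~\eqref{recursive_2} and hence is the fixed point. (The appeal to Schauder--Tychonoff for existence is superfluous: your own part~(iii) already exhibits a fixed point, which is also how the paper obtains existence.)

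The one step that is not yet a proof is the ``strict contraction on the isotropy piece'', and the inequality you offer for it --- $|\{\lambda\in vE^k:g\cdot\lambda=\lambda\}|<|vE^k|$ for large $k$ --- is not by itself sufficient. First, in the recursion $\chi_\beta(\tau)(u_g)=Z(\beta,\tau)^{-1}\sum_k e^{-\beta k}\sum_{g\cdot\mu=\mu}\tau(u_{g|_\mu})$ the paths $\mu$ with $g|_\mu$ a unit can carry essentially the full weight $\rho(A_{vN})m_{d(g)}$ (for $g$ a unit they carry exactly it), so no contraction is possible on those terms; you must split the fixed paths according to whether $g|_\mu$ is a unit, feed the unit-restriction terms in as an exponentially decaying inhomogeneity using the vertex convergence together with $Z(\beta,\chi_\beta^n(\tau))\to\rho(A_{vN})$, and prove the contraction only for the non-unit block. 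This split is exactly the role of the sets $\mathcal{F}^k_g(v)\subseteq\mathcal{G}^k_g(v)$, of Lemma~\ref{lem:tech estimate} (which rests on the argument of \cite[Lemma~8.7]{LRRW2}), and of the induction giving $|\chi_\beta^n(\tau)(u_h)-\theta(u_h)|\le(nK+D)\lambda^{n-1}m_{d(h)}$ uniformly over the finite set $g|_{E^*}$ in Theorem~\ref{limit_trace}. Second, strict inequality level by level does not give a factor bounded away from $1$ uniformly in $k$ and in the finitely many restrictions; you need a weighted, uniform version. Your mechanism can be upgraded to supply one: if a non-unit $h$ moves some $\lambda_0$ of length $k_0$ (faithfulness), then every extension of $\lambda_0$ is moved, whence
\[
\sum_{k\ge0}e^{-\beta k}\sum_{\mu\in d(h)E^k,\ h\cdot\mu=\mu}m_{s(\mu)}
\ \le\ \rho(A_{vN})\big(m_{d(h)}-e^{-\beta k_0}m_{s(\lambda_0)}\big),
\]
and taking the maximum of the resulting factors over the finitely many non-unit elements of $\{g|_\mu\}$ (this is where finite-state enters) gives a uniform $\alpha<1$ dominating the non-unit block, which would even let you avoid citing \cite[Lemma~8.7]{LRRW2}. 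As written, however, the central quantitative estimate --- which you yourself flag as the anticipated obstacle --- is asserted rather than proved, and in the form stated it would not close the argument.
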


We start with a straightforward observation about the map $\chi_\beta$ of~\eqref{eq:chi}.

\begin{lema}\label{lem:chi fp}
Let $\mathcal{G} \curvearrowright E$ be a faithful self-similar action of a groupoid on a
finite strongly connected graph, and suppose that $\beta > \log\rho(A_E)$. Then the map
$\chi_\beta$ is weak$^*$-continuous. If $\tau \in \Tr(C^*(\mathcal{G}))$ and
$\big(\chi^n_\beta(\tau)\big)_{n=1}^\infty$ is weak$^*$-convergent, then $\theta :=
\lim_n^{w*} \chi^n_\beta(\tau)$ belongs to $\Tr(C^*(\mathcal{G}))$ and
$\chi_\beta(\theta) = \theta$.
\end{lema}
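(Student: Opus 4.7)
The plan is to read off weak$^*$-continuity of $\chi_\beta$ directly from the explicit formula~\eqref{eq:Psi-beta-tau} for $\Psi_{\beta,\tau}$, and then to derive the other two statements formally. Specialising~\eqref{eq:Psi-beta-tau} with $\mu$ and $\nu$ taken to be (length-zero) vertices gives, for each $g \in \mathcal{G}$,
\[
\chi_\beta(\tau)(u_g) = \delta_{t(g),d(g)}\, Z(\beta,\tau)^{-1} \sum_{k=0}^{\infty} e^{-\beta k} \sum_{\lambda \in t(g)E^k,\,g\cdot\lambda=\lambda} \tau(u_{g|_\lambda}),
\]
so the value of $\chi_\beta(\tau)$ at any generator depends on $\tau$ only through values $\tau(u_h)$ at other generators and through the partition function $Z(\beta,\tau)$.

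For the continuity claim, the crucial estimate is that the inner sum over $\lambda$ has at most $|t(g)E^k|$ terms, each of modulus at most $1$, and since $\beta > \log\rho(A_E)$ the quantities $|vE^k|$ grow at most like a constant times $\rho(A_E)^k$, so $e^{-\beta k}|vE^k|$ is summable in $k$ with a majorant independent of $\tau$. Dominated convergence (with counting measure on $\mathbb{N}$) then turns weak$^*$-convergence $\tau_n \to \tau$ into convergence of both the numerator series and $Z(\beta,\tau_n) \to Z(\beta,\tau)$, by the identical argument applied to the defining series of $Z$; since $Z(\beta,\tau) \geq \tau(1) = 1$, reciprocating preserves the limit. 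Pointwise convergence on the dense $*$-subspace $\mathrm{span}\{u_g : g \in \mathcal{G}\}$, together with the uniform bound $\|\chi_\beta(\tau)\| = 1$, upgrades to weak$^*$-continuity of $\chi_\beta$ on all of $C^*(\mathcal{G})$.

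For the remaining assertions, the tracial state space is weak$^*$-closed in the state space (it is cut out by the vanishing of the weak$^*$-continuous functionals $\phi \mapsto \phi(ab) - \phi(ba)$) and the state space is weak$^*$-compact by Banach--Alaoglu; hence any weak$^*$-limit of $\big(\chi^n_\beta(\tau)\big)$ automatically lies in $\Tr(C^*(\mathcal{G}))$. Applying the weak$^*$-continuity of $\chi_\beta$ to the identity $\chi^{n+1}_\beta(\tau) = \chi_\beta(\chi^n_\beta(\tau))$ and passing to the limit then gives $\chi_\beta(\theta) = \theta$. The main obstacle is essentially bookkeeping: confirming that the spectral-radius bound on $|vE^k|$ yields a majorant which is genuinely uniform in $\tau$, and that the reciprocal of $Z(\beta,\tau)$ does not spoil continuity; both concerns dissolve once one records the two-sided bound $1 \leq Z(\beta,\tau) \leq \sum_k e^{-\beta k}|E^k|$.
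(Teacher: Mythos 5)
Your argument is correct, but it reaches the continuity of $\chi_\beta$ by a different route than the paper. The paper disposes of continuity in one line by citing the result of Laca--Raeburn--Ramagge--Whittaker that $\tau \mapsto \Psi_{\beta,\tau}$ is a homeomorphism of the trace simplex of $C^*(\mathcal{G})$ onto the KMS$_\beta$-simplex of $\mathcal{T}(\mathcal{G},E)$, and then composes with the (obviously weak$^*$-continuous) restriction of states to the unital subalgebra $C^*(\mathcal{G})$. You instead prove continuity by hand from the formula~\eqref{eq:Psi-beta-tau}: the inner sums are finite, the terms are dominated by $e^{-\beta k}|vE^k|$, which is summable uniformly in $\tau$ because $\beta > \log\rho(A_E)$ (for the strongly connected graph one even has $|vE^k| \le C\rho(A_E)^k$ via the Perron--Frobenius eigenvector), and $Z(\beta,\tau)^{-1}$ behaves well because $1 \le Z(\beta,\tau) \le \sum_k e^{-\beta k}|E^k| < \infty$; pointwise convergence on $\operatorname{span}\{u_g\}$ plus the uniform norm bound then gives weak$^*$-continuity. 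This is more work but self-contained, needing only the convergence statement of \cite[Theorem~6.1]{LRRW2} rather than the full simplex-isomorphism theorem, and it yields the explicit generator-by-generator formula for $\chi_\beta(\tau)$ that is in any case used later in the paper. Two small points of hygiene: your dominated-convergence step should be phrased either for nets (the $\varepsilon/3$ truncation argument works verbatim) or with the remark that $\mathcal{G}$ is countable, so $C^*(\mathcal{G})$ is separable and the state space is weak$^*$-metrizable, making sequential arguments legitimate. The second half of your proof (trace space closed in the compact state space, then $\chi_\beta(\theta) = \chi_\beta(\lim_n \chi_\beta^n(\tau)) = \lim_n \chi_\beta^{n+1}(\tau) = \theta$) is exactly the paper's argument.
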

\begin{proof}
The map $\tau \mapsto \Psi_{\beta, \tau}$ is a homeomorphism and hence continuous, and
restriction of states to a subalgebra is clearly continuous, so $\chi_\beta$ is
continuous. Hence if $\chi^n_\beta(\tau) \stackrel{w^*}{\to} \theta$, then $\theta \in
\Tr(C^*(\mathcal{G}))$ because the trace simplex of a unital $C^*$-algebra is
weak$^*$-compact, and then $\chi_\beta(\theta) = \chi_\beta(\lim^{w*}_n
\chi_\beta^n(\tau)) = \lim^{w*}_n\chi_\beta^{n+1}(\tau) = \theta$.
\end{proof}

\begin{prop}\label{prop:relation}
Let $\mathcal{G} \curvearrowright E$ be a faithful self-similar action of a groupoid on a
finite graph, and fix $\beta > \log \rho(A_E)$. Let $\chi_\beta : \Tr(C^*(\mathcal{G})) \to
\Tr(C^*(\mathcal{G}))$ be the map~\eqref{eq:chi}. For $\tau \in \Tr(C^*(\mathcal{G}))$,
define
\[
    N(\beta,\tau) := e^{\beta}(1-Z(\beta,\tau)^{-1}).
\]
\begin{enumerate}
\item If $\tau \in \Tr(C^*(\mathcal{G}))$ is a fixed point for $\chi_\beta$, then for
    each $g \in \mathcal{G}$, we have
\begin{equation}\label{recursive_2}
  N(\beta,\tau)^n \tau(u_g) = \sum_{\mu \in E^n,\, g \cdot \mu = \mu} \tau(u_{g|_{\mu}}) \qquad \text{ for all } n \geq 1.
\end{equation}
\item If $E$ is strongly connected with adjacency matrix $A_E$, and $\tau \in
    \Tr(C^*(\mathcal{G}))$ satisfies~\eqref{recursive_2}, then $m := (\tau(u_v))_{v
    \in E^0}$ is the Perron--Frobenius eigenvector of $A_E$, and $N(\beta,\tau) =
    \rho(A_E)$.
\end{enumerate}
\end{prop}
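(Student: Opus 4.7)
The plan is to route both parts through a single recursion for
\[
T_n(g) := \sum_{\mu \in E^n,\, g\cdot\mu = \mu} \tau(u_{g|_\mu}).
\]
Factoring $\mu = e\nu$ with $e \in E^1$ and using $g\cdot(e\nu) = (g\cdot e)(g|_e\cdot\nu)$ together with $g|_{e\nu} = (g|_e)|_\nu$ gives
\[
T_n(g) = \sum_{e\in d(g)E^1,\, g\cdot e = e} T_{n-1}(g|_e) \qquad (n\geq 1),
\]
while $T_0(g) = \tau(u_g)$. Taking $\mu = \nu = d(g) = t(g)$ in~\eqref{eq:Psi-beta-tau} rewrites the fixed-point hypothesis $\chi_\beta(\tau)(u_g) = \tau(u_g)$ as
\[
(Z(\beta,\tau) - 1)\,\tau(u_g) = \sum_{k=1}^{\infty} e^{-\beta k}\, T_k(g)
\]
for each loop $g$. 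For non-loops $\tau(u_g)$ vanishes (using the trace property together with orthogonality of the vertex projections $u_v$) and every summand on the right of~\eqref{recursive_2} vanishes too (the condition $g\cdot\mu = \mu$ forces $r(\mu) = d(g) = t(g)$), so the identity is trivial there.

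For part (1) I would first establish the $n=1$ case and then induct. Plugging the recursion into the rearranged fixed-point identity, swapping summation order (absolutely convergent since $|T_k(g)| \leq |E^k|$ grows no faster than a constant times $\rho(A_E)^k$ while $e^{-\beta} < \rho(A_E)^{-1}$), and reindexing $j = k-1$ yields
\[
(Z-1)\,\tau(u_g) = e^{-\beta} \sum_{e:\, g\cdot e = e} \Bigl(\sum_{j=0}^{\infty} e^{-\beta j}\, T_j(g|_e)\Bigr).
\]
The key observation is that $g\cdot e = e$ forces $t(g|_e) = s(e) = d(g|_e)$, so $g|_e$ is itself a loop and the fixed-point identity applied to $g|_e$ collapses the inner sum to $Z\,\tau(u_{g|_e})$. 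A quick rearrangement then gives $T_1(g) = e^{\beta}(1 - Z^{-1})\,\tau(u_g) = N(\beta,\tau)\,\tau(u_g)$. The cases $n \geq 2$ follow by induction: assuming~\eqref{recursive_2} at level $n$ for every loop, the recursion delivers $T_{n+1}(g) = \sum_{e:\,g\cdot e = e} N^n\,\tau(u_{g|_e}) = N^n\, T_1(g) = N^{n+1}\,\tau(u_g)$.

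For part (2) I would specialise~\eqref{recursive_2} with $n=1$ to $g = \text{id}_v$ for each $v\in E^0$. Since the action is the identity on units, every edge $e\in vE^1$ satisfies $\text{id}_v\cdot e = e$ and $\text{id}_v|_e = \text{id}_{s(e)}$, so the identity becomes
\[
N(\beta,\tau)\,\tau(u_v) = \sum_{e\in vE^1} \tau(u_{s(e)}) = \sum_{w\in E^0} A_E(v,w)\,\tau(u_w),
\]
i.e.\ $A_E m = N(\beta,\tau)\, m$ with $m := (\tau(u_v))_{v\in E^0}$. Each $u_v$ is a projection, so $m$ has nonnegative entries, and $\sum_v m_v = \tau(1) = 1$ ensures $m \neq 0$. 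Strong connectedness of $E$ makes $A_E$ irreducible, and the Perron--Frobenius theorem then identifies $m$ as the unique (strictly positive) Perron--Frobenius eigenvector of $A_E$ and pins $N(\beta,\tau) = \rho(A_E)$.

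The main obstacle lies in the $n=1$ case of part (1): the fixed-point equation only controls $\tau(u_g)$ through the weighted sum $\sum_k e^{-\beta k} T_k(g)$, whereas~\eqref{recursive_2} demands a term-by-term identity. The trick that unlocks this is reapplying the fixed-point equation separately to each restriction $g|_e$, and this is precisely where self-similarity earns its keep: the property that $g|_e$ inherits loop-hood from $g$ whenever $g\cdot e = e$ is what lets one close the induction.
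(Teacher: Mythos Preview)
Your argument is correct and follows essentially the same route as the paper's proof. Both proofs peel off the $k=0$ term from the fixed-point identity, factor paths as $\mu=e\nu$ (your recursion $T_n(g)=\sum_{e:\,g\cdot e=e}T_{n-1}(g|_e)$ is the paper's bijection $(e,\nu)\mapsto e\nu$), and then collapse the inner sum by re-applying the fixed-point equation to the restriction $g|_e$; the paper phrases this last step as recognising $\Psi_{\beta,\tau}(s_e u_{g|_e}s_e^*)=e^{-\beta}\chi_\beta(\tau)(u_{g|_e})$, while you recognise $\sum_j e^{-\beta j}T_j(g|_e)=Z\,\chi_\beta(\tau)(u_{g|_e})$ directly, which is the same observation without passing through $\Psi$. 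Part~(2) is identical in both.
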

\begin{proof}
(1) For each $g \in \mathcal{G}$ we have
\begin{align*}
\tau(u_g) &= \chi_\beta(\tau)(u_g) = Z(\beta,\tau)^{-1} \sum_{k=0}^{\infty} e^{-\beta k} \Big( \sum_{\mu \in E^k,\, g \cdot \mu = \mu} \tau(u_{g|_\mu}) \Big) \\
   &= Z(\beta,\tau)^{-1} \Big[ \tau(u_g) +  e^{-\beta} \sum_{k=0}^{\infty} e^{-\beta k} \Big( \sum_{\mu \in E^{k+1},\, g \cdot \mu = \mu} \tau(u_{g|_\mu}) \Big) \Big].
\end{align*}
The map $(e,\nu) \mapsto e\nu$ is a bijection
\begin{equation*}
\{(e,\nu)  \in E^1 \times E^k : s(e) = r(\nu), g \cdot e = e\text{ and }g|_e \cdot \nu = \nu\}
    \ \longrightarrow\ \{\mu \in E^{k+1} : g \cdot \mu = \mu\}.
\end{equation*}
So the definition of $\Psi_{\beta,\tau}$ yields
\begin{align}
\tau(u_g) &= Z(\beta,\tau)^{-1} \tau(u_g) + \sum_{e \in E^1,\, g \cdot e = e}
                \Big(Z(\beta,\tau)^{-1} e^{-\beta} \sum_{k=0}^{\infty} e^{-\beta k}
                    \Big(\sum_{\nu \in s(e)E^k,\, g|_e \cdot \nu = \nu} \tau(u_{(g|_e)|_{\nu}}) \Big)
                \Big)\nonumber\\
	& = Z(\beta,\tau)^{-1} \tau(u_g) + \sum_{e \in E^1,\, g \cdot e = e} \Psi_{\beta,\tau}(s_e u_{g|_e} s_e^*).\label{eq:tau(ug)}
\end{align}
We have $\Psi_{\beta,\tau}(s_e u_{g|_e} s_e^*) = \delta_{s(e),t(g)} \delta_{s(e),d(g)}
e^{-\beta} \Psi_{\beta,\tau}(u_{g|_e}) = e^{-\beta} \chi_\beta(\tau)(u_{g |_e})$.
Applying this and rearranging~\eqref{eq:tau(ug)} gives
\[
e^\beta\big(1 - Z(\beta,\tau)^{-1}\big)\tau(u_g)
    = \sum_{e \in E^1,\, g \cdot e = e} \chi_\beta(\tau)(u_{g|_e})
    = \sum_{e \in E^1,\, g \cdot e = e} \tau(u_{g|_e}).
\]
Statement~(1) now follows from an induction on $n$.

(2) Using~\eqref{recursive_2} for $\tau$ with $n=1$ at the first step, we see that for $v
\in E^0$,
\[
m_v = N(\beta, \tau)^{-1} \sum_{e \in vE^1} \tau(u_{s(e)})
    = N(\beta, \tau)^{-1} \sum_{w \in E^0} A_E(v,w) \tau(u_{w})
    = N(\beta, \tau)^{-1} (A_E m)_v.
\]
Hence, since $1 = \tau(1) = \sum_{v \in E^0} \tau(u_v)$, the vector $m$ is a unimodular
nonnegative eigenvector for the irreducible matrix $A_E$ and has eigenvalue $N(\beta,
\tau)$. So the Perron--Frobenius theorem \cite[Theorem~1.6]{Sen} shows that $m$ is the
Perron--Frobenius eigenvector and $N(\beta,\tau) = \rho(A_E)$.
\end{proof}

We now turn our attention to the situation where $E$ is strongly connected, and
$\mathcal{G} \curvearrowright E$ is finite-state, and aim to show that $\chi_\beta$
admits a unique fixed point. The strategy is to show that if $C^*(\mathcal{G})$ admits a
trace $\theta$ satisfying~\eqref{recursive_2}, then for any other trace $\tau$ we have
$\chi^n_\beta(\tau) \to \theta$. From this it will follow first that $\chi^n_\beta$
admits at most one fixed point, and second that a trace $\theta$ is fixed point if and
only if it satisfies~\eqref{recursive_2}. We start with an easy result from
Perron--Frobenius theory.

\begin{lema}\label{lema:primitive_matrix}
Let $A \in M_n(\mathbb{R})$ be an irreducible matrix, and take $\beta > \log\rho(A)$.
\begin{enumerate}
\item The matrix $I-e^{-\beta}A$ is invertible, and $A_{vN} := (I-e^{-\beta}A)^{-1}$
    is primitive; indeed, every entry of $A_{vN}$ is strictly positive.
\item Let $m^A$ be the Perron--Frobenius eigenvector of $A$. Then $m^A$ is also the
    Perron--Frobenius eigenvector of $A_{vN}$, and $\rho(A_{vN}) = (1 -
    e^{-\beta}\rho(A))^{-1}$.
\end{enumerate}
\end{lema}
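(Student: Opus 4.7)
The plan is to exploit the Neumann series representation of $(I - e^{-\beta}A)^{-1}$, which reduces both statements to standard facts about irreducible nonnegative matrices.

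For part (1), the hypothesis $\beta > \log\rho(A)$ gives $\rho(e^{-\beta}A) = e^{-\beta}\rho(A) < 1$, so $I - e^{-\beta}A$ is invertible and
\[
A_{vN} = (I - e^{-\beta}A)^{-1} = \sum_{k=0}^{\infty} e^{-\beta k} A^k,
\]
where the series converges in operator norm. Since $A$ is irreducible with nonnegative entries, for each pair of indices $(v,w)$ there exists some $k \geq 0$ with $A^k(v,w) > 0$, while $A^j(v,w) \geq 0$ for every $j$. Summing term by term then shows that every entry of $A_{vN}$ is strictly positive, which in particular implies $A_{vN}$ is primitive.

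For part (2), I would simply apply the Neumann series to $m^A$: since $A m^A = \rho(A) m^A$, we get $A^k m^A = \rho(A)^k m^A$ for each $k$, and therefore
\[
A_{vN}\, m^A = \sum_{k=0}^{\infty} e^{-\beta k} \rho(A)^k\, m^A = \bigl(1 - e^{-\beta}\rho(A)\bigr)^{-1} m^A.
\]
Thus $m^A$ is an eigenvector of $A_{vN}$ with eigenvalue $(1 - e^{-\beta}\rho(A))^{-1}$. Because $A_{vN}$ has strictly positive entries, the Perron--Frobenius theorem guarantees a unique (up to scaling) strictly positive eigenvector, whose eigenvalue is $\rho(A_{vN})$. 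Since $m^A$ has strictly positive entries (being the Perron--Frobenius vector of the irreducible matrix $A$), it must be that Perron--Frobenius vector of $A_{vN}$, and $\rho(A_{vN}) = (1 - e^{-\beta}\rho(A))^{-1}$.

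There is no real obstacle here; the only point requiring any care is justifying the termwise manipulation of the Neumann series, which is immediate from norm convergence, and invoking the right form of the Perron--Frobenius theorem (uniqueness of the positive eigenvector for a primitive, or even just irreducible, nonnegative matrix).
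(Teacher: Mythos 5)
Your proposal is correct and follows essentially the same route as the paper: Neumann series plus irreducibility for the strict positivity in part (1), and uniqueness of the positive (Perron--Frobenius) eigenvector of the positive matrix $A_{vN}$ in part (2). The only cosmetic difference is that you obtain the eigenvalue equation by summing the series applied to $m^A$, whereas the paper gets it more directly by computing $A_{vN}^{-1}m^A = (I-e^{-\beta}A)m^A = (1-e^{-\beta}\rho(A))m^A$ and then multiplying by $(1-e^{-\beta}\rho(A))^{-1}A_{vN}$.
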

\begin{proof}
(1) The matrix $I - e^{-\beta} A$ is invertible because $e^\beta > \rho(A)$ and so does
not belong to the spectrum of $A$. As in, for example,
\cite[Section~VII.3.1]{DunfordSchwartz}, we have
\[
A_{vN} := (I - e^{-\beta}A)^{-1}
    = \sum^\infty_{k=0} e^{-k\beta} A^k.
\]
Fix $i,j \le n$. Since $A$ is irreducible, we have $A^k_{i,j} > 0$ for some $k \ge 0$,
and since $A^l_{i,j} \ge 0$ for all $l$, we deduce that $(A_{vN})_{i,j} \ge e^{-\beta
k}A^k_{i,j} > 0$.

(2) We compute $A_{vN}^{-1} m^A = (I - e^{-\beta}A)m^A = (1-e^{-\beta}\rho(A))m^A$.
Multiplying through by $(1-e^{-\beta}\rho(A))^{-1} A_{vN}$ shows that $m^A$ is a positive
eigenvector of $A_{vN}$ with eigenvalue $(1-e^{-\beta}\rho(A))^{-1}$, so the result
follows from uniqueness of the Perron--Frobenius eigenvector of $A_{vN}$.
\end{proof}

\begin{nota}
Henceforth, given a self-similar action $\mathcal{G} \curvearrowright E$ of a groupoid on
a finite graph, and a trace $\tau \in \Tr(C^*(\mathcal{G}))$, we denote by $x^\tau \in
[0,1]^{E^0}$ the vector
\[
x^\tau = \big(\tau(u_v)\big)_{v \in E^0}.
\]
\end{nota}

\begin{prop}\label{prop:formula_for_tau_vertices}
Let $\mathcal{G} \curvearrowright E$ be a faithful self-similar action of a groupoid on a
finite strongly connected graph. Fix $\beta > \log \rho(A_E)$, and let $A_{vN} :=
(I-e^{-\beta}A_E)^{-1}$. Let $\chi_\beta : \Tr(C^*(\mathcal{G})) \to
\Tr(C^*(\mathcal{G}))$ be the map~\eqref{eq:chi}. Fix $\tau \in \Tr(C^*(\mathcal{G}))$.
Then
\begin{equation}\label{formula_for_tau_vertices}
x^{\chi_\beta^n(\tau)} = \|A_{vN}^n x^\tau\|_1^{-1} A_{vN}^n x^\tau.
\end{equation}
\end{prop}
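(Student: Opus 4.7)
The plan is to proceed by induction on $n$, with the base case $n=1$ being the entire content of the statement. For the base case I would evaluate $\chi_\beta(\tau)(u_v)$ for $v \in E^0$ directly from the defining formula~\eqref{eq:Psi-beta-tau}. Since $v$ is a unit, the action satisfies $v \cdot \mu = \mu$ for every $\mu \in vE^*$ and $v|_\mu = \operatorname{id}_{s(\mu)}$ by property~(2) in the list following the definition of self-similar actions. So the inner sum in~\eqref{eq:Psi-beta-tau} collapses to $\sum_{\mu \in vE^k} \tau(u_{s(\mu)})$, which equals $\sum_{w \in E^0} (A_E^k)_{v,w}\tau(u_w) = (A_E^k x^\tau)_v$.

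Summing the geometric-type series over $k$ and using the formula $A_{vN} = \sum_{k=0}^\infty e^{-\beta k} A_E^k$ from Lemma~\ref{lema:primitive_matrix}(1), I would obtain
\begin{equation*}
\chi_\beta(\tau)(u_v) = Z(\beta,\tau)^{-1} (A_{vN} x^\tau)_v,
\qquad\text{i.e.}\qquad
x^{\chi_\beta(\tau)} = Z(\beta,\tau)^{-1} A_{vN} x^\tau.
\end{equation*}
An entirely analogous calculation (or simply summing the previous identity over $v$ and using that $\chi_\beta(\tau)$ is a state, so $\|x^{\chi_\beta(\tau)}\|_1 = 1$) identifies $Z(\beta,\tau) = \|A_{vN} x^\tau\|_1$. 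This gives the case $n=1$ of~\eqref{formula_for_tau_vertices}.

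For the inductive step, assuming the formula for $n$, I would apply the base case to the trace $\chi_\beta^n(\tau)$ in place of $\tau$, obtaining
\begin{equation*}
x^{\chi_\beta^{n+1}(\tau)} = \|A_{vN}\, x^{\chi_\beta^n(\tau)}\|_1^{-1}\, A_{vN}\, x^{\chi_\beta^n(\tau)},
\end{equation*}
and then substitute the inductive hypothesis $x^{\chi_\beta^n(\tau)} = \|A_{vN}^n x^\tau\|_1^{-1} A_{vN}^n x^\tau$. The positive scalar $\|A_{vN}^n x^\tau\|_1^{-1}$ pulls out of both $A_{vN}(\cdot)$ and $\|\cdot\|_1$, cancels between numerator and denominator, and leaves precisely $\|A_{vN}^{n+1} x^\tau\|_1^{-1} A_{vN}^{n+1} x^\tau$, as required.

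There isn't really a hard step here: the only thing one has to notice is that for a vertex $v$ all paths in $vE^*$ are fixed by $v$ with trivial restrictions, which turns the self-similarity sum in~\eqref{eq:Psi-beta-tau} into the plain matrix power $A_E^k$. After that the summation over $k$ folds directly into $A_{vN}$, and the induction is just a bookkeeping exercise showing that rescaling commutes with the linear map $A_{vN}$.
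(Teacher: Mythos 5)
Your proposal is correct and follows essentially the same route as the paper: the key computation $\chi_\beta(\tau)(u_v) = Z(\beta,\tau)^{-1}(A_{vN}x^\tau)_v$ (using that a vertex fixes every path in $vE^*$ with trivial restriction) is exactly the paper's, and your induction-plus-normalisation bookkeeping is equivalent to the paper's observation that the accumulated product of $Z(\beta,\chi_\beta^k(\tau))^{-1}$ factors must equal $\|A_{vN}^n x^\tau\|_1^{-1}$ because $x^{\chi_\beta^n(\tau)}$ has unit $1$-norm.
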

\begin{proof}
For $v \in E^0$, the definition of $\chi_\beta$ gives
\begin{align*}
\chi_\beta(\tau)(u_v)
    &= Z(\beta,\tau)^{-1} \sum_{k=0}^{\infty} e^{-\beta k} \Big( \sum_{\mu \in vE^k} \tau(u_{s(\mu)}) \Big)\\
    &= Z(\beta,\tau)^{-1} \sum_{k=0}^{\infty} e^{-\beta k} (A_E^k x^\tau)_v
    = Z(\beta,\tau)^{-1} (A_{vN} x^\tau)_v
\end{align*}
So an induction gives $x^{\chi_\beta^n(\tau)} = Z(\beta,\chi_\beta^{n-1}(\tau))^{-1}
\cdots Z(\beta,\tau)^{-1} A_{vN}^n x^\tau$. Since $x^{\chi_\beta^n(\tau)}$ has unit
$1$-norm, we have $Z(\beta,\chi_\beta^{n-1}(\tau))^{-1} \cdots Z(\beta,\tau)^{-1} =
\|A_{vN}^n x^\tau\|_1^{-1}$, and the result follows.
\end{proof}

Our next result shows that for any $\tau \in \Tr(C^*(\mathcal{G}))$, the sequence
$x^{\chi_\beta^n(\tau)}$ converges exponentially quickly to the Perron--Frobenius
eigenvector of $A_E$.

\begin{theor}\label{theor:converging_to_PF}
Let $\mathcal{G} \curvearrowright E$ be a faithful self-similar action of a groupoid on a
finite strongly connected graph. Fix $\beta > \log\rho(A_E)$. Let $\chi_\beta :
\Tr(C^*(\mathcal{G})) \to \Tr(C^*(\mathcal{G}))$ be the map~\eqref{eq:chi}. Fix $\tau \in
\Tr(C^*(\mathcal{G}))$. Let $m = m^E$ be the Perron--Frobenius eigenvector of $A_E$. Then
$x^{\chi_\beta^n(\tau)} \to m^E$ exponentially quickly, and $Z(\beta,\chi_\beta^n(\tau))
\to \rho(A_{vN})$ exponentially quickly.
\end{theor}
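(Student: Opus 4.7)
My plan is to apply the classical power-method convergence theorem for primitive nonnegative matrices to the explicit recursion furnished by Proposition~\ref{prop:formula_for_tau_vertices}. First, $x^\tau$ is an $E^0$-indexed probability vector: its entries are nonnegative because each $u_v$ is a projection in $C^*(\mathcal{G})$, and they sum to $\tau(1_{C^*(\mathcal{G})}) = 1$ since $\sum_{v \in E^0} u_v = 1_{C^*(\mathcal{G})}$. By Proposition~\ref{prop:formula_for_tau_vertices}, it therefore suffices to show that $\|A_{vN}^n x^\tau\|_1^{-1} A_{vN}^n x^\tau \to m^E$ exponentially fast, and then to transfer that rate to the corresponding statement for $Z$.

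Set $\lambda := \rho(A_{vN})$. Since $A_{vN}$ is primitive with simple Perron eigenvalue $\lambda$ and Perron eigenvector $m^E$ (Lemma~\ref{lema:primitive_matrix}), the Perron--Frobenius theorem for primitive matrices (see \cite{Sen}) supplies a constant $\gamma \in [0,\lambda)$ bounding the modulus of every other eigenvalue of $A_{vN}$. Let $\ell$ be the strictly positive left Perron eigenvector of $A_{vN}$, normalized so that $\ell^T m^E = 1$, and let $P := m^E \ell^T$ be the rank-one spectral projection onto $\operatorname{span}(m^E)$ along the $A_{vN}$-invariant complement $\ker P$. The coefficient $c := \ell^T x^\tau$ is strictly positive because $\ell$ has strictly positive entries and $x^\tau$ is a nonzero nonnegative vector; writing $y := x^\tau - c m^E \in \ker P$, I obtain
\[
A_{vN}^n x^\tau = c \lambda^n m^E + A_{vN}^n y, \qquad \|A_{vN}^n y\|_1 \leq C \gamma^n
\]
for some constant $C > 0$, since the spectral radius of the restriction of $A_{vN}$ to $\ker P$ is at most $\gamma$. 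Because $\|m^E\|_1 = 1$, the triangle inequality gives $\big|\|A_{vN}^n x^\tau\|_1 - c\lambda^n\big| \leq C\gamma^n$, and dividing then yields $\|x^{\chi_\beta^n(\tau)} - m^E\|_1 = O((\gamma/\lambda)^n)$.

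For the second claim, the recursion $x^{\chi_\beta^{n+1}(\tau)} = Z(\beta, \chi_\beta^n(\tau))^{-1} A_{vN} x^{\chi_\beta^n(\tau)}$ implicit in the proof of Proposition~\ref{prop:formula_for_tau_vertices}, combined with $\|x^{\chi_\beta^{n+1}(\tau)}\|_1 = 1$, gives $Z(\beta, \chi_\beta^n(\tau)) = \|A_{vN} x^{\chi_\beta^n(\tau)}\|_1$. Since $\|A_{vN} m^E\|_1 = \lambda \|m^E\|_1 = \rho(A_{vN})$ and $x \mapsto \|A_{vN} x\|_1$ is $\|A_{vN}\|_1$-Lipschitz in the $1$-norm, exponential convergence of $x^{\chi_\beta^n(\tau)}$ to $m^E$ transfers directly to exponential convergence of $Z(\beta, \chi_\beta^n(\tau))$ to $\rho(A_{vN})$ at the same rate.

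The only genuine subtlety is invoking the correct quantitative form of Perron--Frobenius, namely the strict spectral gap between the Perron eigenvalue of a primitive matrix and the remainder of its spectrum; once this is in hand the rest is purely linear algebra. An alternative entirely elementary route is to invoke the Birkhoff--Hopf theorem on contraction of the Hilbert projective metric by strictly positive matrices, which likewise yields a geometric rate without any eigendecomposition.
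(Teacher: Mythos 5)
Your argument is correct and follows essentially the same route as the paper: you combine Proposition~\ref{prop:formula_for_tau_vertices} with the spectral gap of the primitive matrix $A_{vN}$ (the paper invokes \cite[Theorem~1.2]{Sen} for the exponential convergence $\rho(A_{vN})^{-n}A_{vN}^n \to m\,\widetilde{m}^t$, which is exactly your rank-one spectral projection decomposition, and then treats $Z(\beta,\chi_\beta^n(\tau)) = \|A_{vN}x^{\chi_\beta^n(\tau)}\|_1$ by a ratio computation where you use a Lipschitz estimate). The only quibble is that a spectral radius bound $\gamma$ on $A_{vN}|_{\ker P}$ gives $\|A_{vN}^n y\|_1 \le C n^s\gamma^n$, or $C_{\varepsilon}(\gamma+\varepsilon)^n$, rather than $C\gamma^n$ when subdominant eigenvalues carry nontrivial Jordan blocks; replacing $\gamma$ by any $\gamma' \in (\gamma,\rho(A_{vN}))$ repairs this (the paper makes the analogous adjustment when absorbing Seneta's $Cn^s$ factor) and does not affect the conclusion.
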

\begin{proof}
Since $E$ is strongly connected, Lemma~\ref{lema:primitive_matrix} shows that $m$ is the
(right) Perron--Frobenius eigenvector of $A_{vN} := (I - e^{-\beta}A_E)^{-1}$. Write
$\tilde{m}$ for the left Perron--Frobenius eigenvector of $A_{vN}$ such that
$\widetilde{m} \cdot m = 1$.

Let $r := \widetilde{m} \cdot x^\tau$. Then $r > 0$ because every entry of
$\widetilde{m}$ is strictly positive, and $x^\tau$ is a nonnegative nonzero vector.

Proposition~\ref{prop:formula_for_tau_vertices} implies that
\begin{align}
x^{\chi_\beta^n(\tau)}_v - m_v
    &= \frac{\rho(A_{vN})^n}{\big\|A_{vN}^nx^\tau\big\|_1} \Big[\big(\rho(A_{vN})^{-n} A_{vN}^n x^\tau - r m\big)_v \nonumber\\
    &\hskip9em{}+ \big(r - \big\|\big(\rho(A_{vN})^{-n} A_{vN}^n x^\tau \big)\big\|_1\big)m_v \Big].\label{eq:tau-at-v approx}
\end{align}

By \cite[Theorem 1.2]{Sen}, there exist a real number $0 < \lambda < 1$, a positive
constant $C$, and an integer $s \geq 0$ such that for large $n$ we have
$\rho(A_{vN})^{-n} A_{vN}^n - m \cdot \widetilde{m}^t \le C n^s \lambda^n$. In fact,
since $C n^s (\lambda'/\lambda)^n \to 0$ for any $0 < \lambda' < \lambda < 1$, by
adjusting the value of $\lambda$, we can take $C = 1$ and $s = 0$. So for large $n$, we
have
\[
\big|\rho(A_{vN})^{-n} (A_{vN}^nx^\tau)_v - r m_v \big| \leq \lambda^n.
\]
Since $v \in E^0$ was arbitrary, summing over $v \in E^0$ we deduce that
\[
    \big|r - \rho(A_{vN})^{-n}\|A_{vN}^n x^\tau\|_1 \big| \leq |E^0| \lambda^n.
\]
Hence $\rho(A_{vN})^{-n}\big\|A_{vN}^n x^\tau\big\|_1 \stackrel{n}{\to} r$ exponentially
quickly. Making this approximation twice in~\eqref{eq:tau-at-v approx}, we obtain
\[
|x_v^{\chi^n_{\beta}(\tau)} - m_v| \leq
    \frac{(1+|E^0|)}{\rho(A_{vN})^{-n}\big\|A_{vN}^n x^\tau\big\|_1} \lambda^n,
\]
which converges exponentially quickly to 0. Hence $x^{\chi_\beta^n(\tau)} \to m$
exponentially quickly.

For the second statement, using Proposition~\ref{prop:formula_for_tau_vertices} at the
third equality, we calculate
\begin{align*}
Z(\beta,\chi^n_\beta(\tau))
    &= \sum_{k=0}^{\infty} e^{-\beta k} \sum_{\mu \in E^k} \chi^n_\beta(\tau)(u_{s(\mu)})\\
    &= \big\|A_{vN}x^{\chi_\beta^n(\tau)}\big\|_1
    = \frac{\|A_{vN}^{n+1} x^\tau\|_1}{\|A_{vN}^n x^\tau\|_1}
    = \frac{\rho(A_{vN})^{-(n+1)}\big\|A_{vN}^{n+1}x^\tau\big\|_1}
        {\rho(A_{vN})^{-n}\big\|A_{vN}^n x^\tau \big\|_1} \rho(A_{vN}).
\end{align*}
We saw that $\rho(A_{vN})^{-(n+1)}\big\|A_{vN}^{n+1}x^\tau \big\|_1$ converges to $r > 0$
exponentially quickly, so the ratio
$\frac{\rho(A_{vN})^{-(n+1)}\big\|A_{vN}^{n+1}x^\tau\big\|_1}
{\rho(A_{vN})^{-n}\big\|A_{vN}^n x^\tau \big\|_1}$ converges exponentially quickly to
$1$.
\end{proof}

The following estimate is needed for our key technical result, Theorem~\ref{limit_trace}.

\begin{lema}\label{lem:tech estimate}
Let $\mathcal{G} \curvearrowright E$ be a faithful finite-state self-similar action of a
groupoid on a finite strongly connected graph. Let $A_{vN} := (I-e^{-\beta}A_E)^{-1}$,
and let $m = m^E$ be the unimodular Perron--Frobenius eigenvector of $A_E$. For $g \in
\mathcal{G} \setminus E^0$, $v \in E^0$, and $k \ge 0$, define
\[
\mathcal{G}_g^k(v) := \{ \mu \in d(g)E^kv : g \cdot \mu = \mu\}
\quad\text{ and }\quad
\mathcal{F}_g^k(v) := \{ \mu \in \mathcal{G}_g^k(v) : g|_{\mu} = v\}.
\]
Then for $\beta > \log\rho(A_E)$ and $g \in \mathcal{G}$, we have
\[
\sum_{k=0}^\infty e^{-\beta k} \sum_{v \in E^0} |\mathcal{G}^k_g(v) \setminus \mathcal{F}^k_g(v)|m_v
    < \rho(A_{vN}) m_{d(g)}.
\]
\end{lema}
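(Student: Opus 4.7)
The plan is to dominate the left-hand side term-by-term by the ``unconstrained'' sum $\sum_{k,v} e^{-\beta k} |d(g) E^k v| m_v$, to recognise this sum as equal to the right-hand side via Perron--Frobenius, and then to use faithfulness to extract a strictly positive deficit at some particular level.

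For the Perron--Frobenius identification, by Lemma~\ref{lema:primitive_matrix}(2) the unimodular Perron--Frobenius vector $m$ of $A_E$ is also the Perron--Frobenius eigenvector of $A_{vN}$ with eigenvalue $\rho(A_{vN})$. Combining this with the series expansion $A_{vN} = \sum_{k=0}^{\infty} e^{-\beta k} A_E^k$ from the proof of Lemma~\ref{lema:primitive_matrix}(1) and the combinatorial identity $(A_E^k)_{d(g),v} = |d(g) E^k v|$ yields
\[
\rho(A_{vN}) m_{d(g)} = (A_{vN} m)_{d(g)} = \sum_{k=0}^{\infty} e^{-\beta k} \sum_{v \in E^0} |d(g) E^k v|\, m_v.
\]
Since $\mathcal{G}_g^k(v) \setminus \mathcal{F}_g^k(v) \subseteq \mathcal{G}_g^k(v) \subseteq d(g) E^k v$, term-by-term comparison then immediately gives the non-strict inequality LHS $\leq$ RHS.

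To upgrade to a strict inequality I would invoke faithfulness. When $g \in E^0$ (extending the definitions in the obvious way), one checks that $g \cdot \mu = \mu$ and $g|_\mu = s(\mu) = v$ for every $\mu \in d(g)E^k v$, so $\mathcal{G}_g^k(v) = \mathcal{F}_g^k(v)$ for every $(k,v)$ and the LHS vanishes; the strict inequality is then immediate from $m_{d(g)} > 0$. When $g \notin E^0$, injectivity of $\theta : \mathcal{G} \to \operatorname{PIso}(T_E)$ forces $\theta_g \neq \text{id}_{d(g)}$, and hence there exist $n_0 \geq 1$ and $\mu_0 \in d(g) E^{n_0}$ with $g \cdot \mu_0 \neq \mu_0$. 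Setting $v_0 := s(\mu_0)$, this exhibits $\mu_0$ as an element of $d(g) E^{n_0} v_0 \setminus \mathcal{G}_g^{n_0}(v_0)$, so
\[
|\mathcal{G}_g^{n_0}(v_0) \setminus \mathcal{F}_g^{n_0}(v_0)| \leq |\mathcal{G}_g^{n_0}(v_0)| \leq |d(g) E^{n_0} v_0| - 1.
\]
Combining this single deficit with the term-by-term bounds at the remaining $(k,v)$ refines the non-strict inequality to
\[
\sum_{k=0}^{\infty} e^{-\beta k} \sum_{v \in E^0} |\mathcal{G}^k_g(v) \setminus \mathcal{F}^k_g(v)|\, m_v \leq \rho(A_{vN}) m_{d(g)} - e^{-\beta n_0} m_{v_0}.
\]
Strong connectedness of $E$ ensures via Perron--Frobenius that every coordinate of $m$ is strictly positive, so $m_{v_0} > 0$ and the desired strict inequality follows.

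There is no real obstacle: the argument reduces to a term-by-term comparison with a single strict term, supplied by faithfulness and amplified by the positivity of $m_{v_0}$ inherited from strong connectedness. The finite-state hypothesis does not appear to enter this particular estimate, presumably functioning here as a standing assumption carried over from the surrounding framework.
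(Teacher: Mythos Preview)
Your proof is correct and a bit more elementary than the paper's. Both arguments identify the right-hand side as $\sum_{k\ge 0} e^{-\beta k}\sum_v |d(g)E^k v|\,m_v$ via $A_E m = \rho(A_E) m$ and then dominate the left-hand side term by term; the only issue in either case is extracting a single strict deficit. The paper obtains its deficit on the $\mathcal{F}$-side: it quotes \cite[Lemma~8.7]{LRRW2} to find a level $k(g)$ at which $\sum_v |\mathcal{G}_g^{k(g)}(v)\setminus \mathcal{F}_g^{k(g)}(v)|\,m_v \le (\rho(A_E)^{k(g)}-1)m_{d(g)}$, which is strictly less than $\rho(A_E)^{k(g)} m_{d(g)}$. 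You instead obtain your deficit on the $\mathcal{G}$-side, using faithfulness to locate a single path $\mu_0$ not fixed by $g$, so that $|\mathcal{G}_g^{n_0}(v_0)| \le |d(g)E^{n_0}v_0|-1$. Your route avoids the external citation entirely, and your closing remark is right: the finite-state hypothesis is not used in your argument (nor, in fact, is it genuinely used in the paper's proof of this lemma beyond what \cite[Lemma~8.7]{LRRW2} might require); it becomes essential only afterwards in Theorem~\ref{limit_trace}, where one needs the strict inequality to hold with a \emph{uniform} constant $\alpha<1$ over the finite set $\{g|_\mu : \mu\in d(g)E^*\}$.
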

\begin{proof}
The argument of \cite[Lemma~8.7]{LRRW2} shows that there exists $k(g) > 0$ such that
\[
\sum_{v \in E^0} |\mathcal{G}^{nk(g)}_g(v) \setminus \mathcal{F}^{nk(g)}_g(v)| m_v
    \le (\rho(A_E)^{k(g)} - 1)^n m_{d(g)}
\]
for all $n \ge 0$. For each $k \in \mathbb{N}$ we also have
\[
\sum_{v \in E^0} |\mathcal{G}^k_g(v)| m_v
    \le \sum_{v \in E^0} |d(g)E^k v| m_v
    = (A_E^k m)_{d(g)}
    = \rho(A_E)^k m_{d(g)}.
\]
Combining these estimates and using Lemma~\ref{lema:primitive_matrix}(2) at the final
step, we obtain
\begin{align*}
\sum_{k=0}^\infty &e^{-\beta k} \sum_{v \in E^0} |\mathcal{G}_g^k(v) \setminus \mathcal{F}_g^k(v)| m_v \\
    &= \sum_{k \neq k(g)} e^{-\beta k} \sum_{v \in E^0} |\mathcal{G}_g^k(v) \setminus \mathcal{F}_g^k(v)| m_v
        + e^{-\beta k(g)} \sum_{v \in E^0} |\mathcal{G}_g^{k(g)}(v) \setminus \mathcal{F}_g^{k(g)}(v)| m_v \\
    &\le \sum_{k \neq k(g)} e^{-\beta k} \rho(A_E)^k m_{d(g)} + e^{-\beta k(g)}(\rho(A_E)^{k(g)} - 1) m_{d(g)} \\
    &< \sum^\infty_{k=0} e^{-\beta k} \rho(A_E)^k m_{d(g)}\\
    &= \rho(A_{vN}) m_{d(g)}.\qedhere
\end{align*}
\end{proof}

We are now ready to prove a converse to Proposition~\ref{prop:relation}(1), under the
hypotheses that $E$ is strongly connected and the action of $\mathcal{G}$ on $E$ is
finite-state.

\begin{theor}\label{limit_trace}
Let $\mathcal{G} \curvearrowright E$ be a faithful finite-state self-similar action of a
groupoid on a finite strongly connected graph. Fix $\beta > \log\rho(A_E)$. Let
$\chi_\beta : \Tr(C^*(\mathcal{G})) \to \Tr(C^*(\mathcal{G}))$ be the map~\eqref{eq:chi}.
Suppose that $\theta \in \Tr(C^*(\mathcal{G}))$ satisfies~\eqref{recursive_2}. Then for
any $\tau \in \Tr(C^*(\mathcal{G}))$, we have $\lim^{w*}_n \chi_\beta^n(\tau) = \theta$.
In particular, $\theta$ is a fixed point for $\chi_\beta$.
\end{theor}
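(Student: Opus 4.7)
The plan is to prove pointwise convergence $\chi_\beta^n(\tau)(u_g) \to \theta(u_g)$ for every $g \in \mathcal{G}$; since the linear span of $\{u_g : g \in \mathcal{G}\}$ is dense in $C^*(\mathcal{G})$ and the states $\chi_\beta^n(\tau)$ are uniformly bounded, this delivers weak* convergence of the traces. The ``in particular'' clause then follows by taking $\tau = \theta$ and applying Lemma~\ref{lem:chi fp}. The base case $g = v \in E^0$ is already in hand: Theorem~\ref{theor:converging_to_PF} gives $\chi_\beta^n(\tau)(u_v) = x^{\chi_\beta^n(\tau)}_v \to m_v$, and Proposition~\ref{prop:relation}(2) identifies $\theta(u_v) = m_v$.

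For $g \in \mathcal{G} \setminus E^0$, I would use the finite-state hypothesis to set up a finite-dimensional recursion. Let $H$ be the finite subset of $\mathcal{G} \setminus E^0$ consisting of $g$ together with all its non-vertex restrictions; the identity $(g|_\mu)|_\nu = g|_{\mu\nu}$ makes $H$ closed under further restriction. Expanding $\chi_\beta^{n+1}(\tau)(u_h)$ via~\eqref{eq:chi} for $h \in H$, and partitioning the paths $\mu$ with $h \cdot \mu = \mu$ into those with $h|_\mu \in E^0$ (the sets $\mathcal{F}^k_h(v)$) and those with $h|_\mu \in H$, yields a vector recursion
\[
Z_n\, x^{(n+1)}_h = V_n(h) + \sum_{h' \in H} M_{h,h'}\, x^{(n)}_{h'}\qquad(h \in H),
\]
where $x^{(n)}_h := \chi_\beta^n(\tau)(u_h)$, $Z_n := Z(\beta, \chi_\beta^n(\tau))$, $V_n(h) := \sum_{k\ge 0} e^{-\beta k}\sum_v |\mathcal{F}^k_h(v)|\,\chi_\beta^n(\tau)(u_v)$, and $M_{h,h'} := \sum_k e^{-\beta k}\,|\{\mu \in E^k : h\cdot\mu=\mu,\ h|_\mu = h'\}|$. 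Using the base case and dominated convergence (bounding the $k$-sum by $(A_{vN}\mathbf{1})_{d(h)}$), one gets $V_n(h) \to V_\infty(h) := \sum_k e^{-\beta k}\sum_v |\mathcal{F}^k_h(v)| m_v$, and Theorem~\ref{theor:converging_to_PF} gives $Z_n \to \rho(A_{vN})$.

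The decisive ingredient is a spectral bound on $M$. Rescaling coordinates via $y^{(n)}_h := x^{(n)}_h/m_{d(h)}$ replaces $M$ by $\tilde M_{h,h'} := M_{h,h'}\,m_{d(h')}/m_{d(h)}$, and Lemma~\ref{lem:tech estimate} rephrases exactly as $\sum_{h' \in H}\tilde M_{h,h'} < \rho(A_{vN})$ for each $h \in H$; since $H$ is finite, this gives the strict operator bound $\|\tilde M\|_\infty < \rho(A_{vN})$. Consequently, for large $n$ the rescaled iteration $y^{(n+1)} = Z_n^{-1}(\tilde V_n + \tilde M y^{(n)})$ (with $\tilde V_n(h) := V_n(h)/m_{d(h)}$) has operator part of norm at most some $q < 1$, which---combined with convergence of $Z_n$ and $\tilde V_n$---forces $y^{(n)}$ to converge geometrically to the unique solution of $y = \rho(A_{vN})^{-1}(\tilde V_\infty + \tilde M y)$. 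Multiplying~\eqref{recursive_2} for $\theta$ by $e^{-\beta k}$ and summing the resulting geometric series verifies that $(\theta(u_h)/m_{d(h)})_{h \in H}$ satisfies this fixed-point equation, so the limit is pinned down: $x^{(n)}_g \to \theta(u_g)$.

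The main technical obstacle is securing the strict spectral bound $\|\tilde M\|_\infty < \rho(A_{vN})$, and this is exactly the content of Lemma~\ref{lem:tech estimate}: the finite-state hypothesis ensures $H$ is finite so the strict componentwise inequality survives under the maximum, and the strong-connectedness of $E$ supplies the strictly positive Perron--Frobenius vector $m$ that makes the rescaling well-defined. With that bound in hand, the remainder is a routine eventually-contracting affine iteration argument with converging coefficients.
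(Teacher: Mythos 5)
Your proposal is correct, and it runs on the same two engines as the paper's proof: Theorem~\ref{theor:converging_to_PF} (convergence of the vertex values and of $Z(\beta,\chi_\beta^n(\tau))$ to $\rho(A_{vN})$) and Lemma~\ref{lem:tech estimate}, applied uniformly over the finite set of restrictions supplied by the finite-state hypothesis, with the same splitting of the fixed paths into the sets $\mathcal{F}^k_h(v)$ and their complements. The difference is organizational rather than conceptual. The paper subtracts $\theta$ from the outset, setting $\Delta_n(h) = \chi_\beta^n(\tau)(u_h) - \theta(u_h)$, and proves by induction the explicit bound $|\Delta_n(h)| \le (nK+D)\lambda^{n-1}m_{d(h)}$ over all restrictions $h$ of $g$, using a small Cauchy--Schwarz step ($|\tau_{n+1}(u_g)| \le \tau_{n+1}(u_{d(g)})$) to absorb the discrepancy between $Z(\beta,\tau_n)$ and $\rho(A_{vN})$. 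You instead keep the values themselves, recast the recursion as a finite-dimensional affine iteration on $H$, rescale by the strictly positive vector $m$ so that Lemma~\ref{lem:tech estimate} becomes precisely the strict row-sum bound $\|\widetilde{M}\|_\infty < \rho(A_{vN})$ (your computation of the row sums is correct, since every non-unit restriction of an element of $H$ again lies in $H$ and $m_{d(h|_\mu)} = m_{s(\mu)}$), and then invoke a standard eventually-contracting iteration with converging coefficients, identifying the limit afterwards as the unique solution of the limiting affine fixed-point equation; that $(\theta(u_h)/m_{d(h)})_{h \in H}$ solves it follows, as you say, from \eqref{recursive_2} together with Proposition~\ref{prop:relation}(2) and Lemma~\ref{lema:primitive_matrix}(2), which give $N(\beta,\theta)=\rho(A_E)$, $\theta(u_v)=m_v$ and $\sum_k e^{-\beta k}\rho(A_E)^k = \rho(A_{vN})$. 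Your packaging avoids the explicit error induction and the Cauchy--Schwarz step (the $Z_n$ mismatch is absorbed into the converging coefficients); the paper's packaging delivers an explicit exponential rate directly, though yours also yields a geometric rate because the coefficient convergence in Theorem~\ref{theor:converging_to_PF} is exponential. Both arguments conclude identically: pointwise convergence on the dense span of $\{u_g\}$ plus uniform boundedness of states gives weak$^*$ convergence, and the fixed-point claim follows from Lemma~\ref{lem:chi fp}.
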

\begin{proof}
We will prove that for each $g \in \mathcal{G}$ there are constants $0 < \lambda < 1$ and
$K, D > 0$ such that $|\chi_\beta^n(\tau)(u_g) - \theta(u_g)| < (nK + D)K\lambda^{n-1}$
for all $n \ge 0$. Since $(nK + D)\lambda^{n-1} \to 0$ exponentially quickly in $n$, the
first statement will then follow from an $\varepsilon/3$-argument.

To simplify notation, define $\tau_0 := \tau$ and $\tau_n := \chi_\beta^n(\tau)$ for $n
\ge 1$. For $g \in \mathcal{G}$ and $n \ge 0$, let
\[
\Delta_n(g) := \tau_n(u_g) - \theta(u_g).
\]

Fix $g \in \mathcal{G}$; if $t(g) \not= d(g)$, then $\tau_n(u_g) = \theta(u_g) = 0$ by
\cite[Proposition~7.2]{LRRW2}, so we may assume that $t(g) = d(g)$. Since the action is
finite-state, the set $\{g|_\mu : \mu \in d(g)E^*\}$ is finite. By Lemma~\ref{lem:tech
estimate}, there is a constant $\alpha < 1$ such that
\begin{equation}\label{eq:alpha estimate}
\sum_{k=0}^\infty e^{-\beta k} \sum_{v \in E^0} |\mathcal{G}^k_{g|_\mu}(v) \setminus \mathcal{F}^k_{g|_\mu}(v)| m_v
    < \alpha \rho(A_{vN}) m_{d(g|_\mu)}
\end{equation}
for all $\mu \in E^*$.

Since $\theta$ satisfies~\eqref{recursive_2}, we have
\[
\theta(u_g) = N(\beta, \theta)^{-k} \sum_{\mu \in E^k,\,g\cdot\mu = \mu} \theta(u_{g|_{\mu}})\quad\text{ for all $k \ge 0$}.
\]
Consequently,
\[
\sum^\infty_{k=0} e^{-\beta k} \sum_{\mu \in E^k,\,g\cdot\mu = \mu} \theta(u_{g|_{\mu}})
    = \sum^\infty_{k=0} e^{-\beta k} N(\beta,\theta)^k \theta(u_g)
    = \big(1 - e^{-\beta}N(\beta,\theta)\big)^{-1}\theta(u_g).
\]
Since $N(\beta,\theta) = e^{\beta}(1 - Z(\beta,\theta)^{-1})$ by definition, we can
rearrange to obtain
\[
\theta(u_g) = Z(\beta,\theta)^{-1} \sum^\infty_{k=0} e^{-\beta k} \sum_{\mu \in E^k,\,g\cdot\mu = \mu} \theta(u_{g|_{\mu}}).
\]

Using this, and applying the definition of $\chi_\beta$ at the third equality, we
calculate
\begin{align*}
\Delta_{n+1}(g)
     &= \tau_{n+1}(u_g) - \theta(u_g)\\
     &= \chi_\beta(\tau_n)(u_g) - Z(\beta,\theta)^{-1} \sum^\infty_{k=0} e^{-\beta k} \sum_{\mu \in E^k,\,g\cdot\mu = \mu} \theta(u_{g|_{\mu}})\\
     &= Z(\beta,\tau_n)^{-1} \sum_{k=0}^{\infty} e^{-\beta k} \sum_{\mu \in E^k,\, g \cdot \mu = \mu} \tau_n(u_{g|_{\mu}})
        - Z(\beta,\theta)^{-1} \sum^\infty_{k=0} e^{-\beta k} \sum_{\mu \in E^k,\,g\cdot\mu = \mu} \theta(u_{g|_{\mu}}).
\end{align*}
Since the sums are absolutely convergent, we can rewrite each $\theta(u_{g|_{\mu}})$ as
$\tau_n(u_{g|_{\mu}}) - \Delta_n(g|_\mu)$ and rearrange to obtain
\begin{equation}\label{eq:F recurrence}
\begin{split}
\Delta_{n+1}(g)
    &= \big(Z(\beta,\tau_n)^{-1} - Z(\beta,\theta)^{-1}) \sum_{k=0}^{\infty} e^{-\beta k} \Big( \sum_{\mu \in E^k,\, g \cdot \mu = \mu} \tau_n(u_{g|_{\mu}}) \Big)\\
        &\hskip5em{}+ Z(\beta,\theta)^{-1} \sum^\infty_{k=0} e^{-\beta k} \sum_{\mu \in E^k,\,g\cdot\mu = \mu} \Delta_n(g|_{\mu}).
\end{split}
\end{equation}

Since $\theta$ satisfies~\eqref{recursive_2}, Proposition~\ref{prop:relation}(2) combined
with the definition of $N(\beta, \theta)$ imply that $Z(\beta, \theta) = \big(1 -
e^{-\beta}N(\beta, \theta)\big)^{-1} = \big(1 - e^{-\beta}\rho(A)\big)^{-1}$, and then
Lemma~\ref{lema:primitive_matrix}(2) gives $Z(\beta, \theta) = \rho(A_{vN})$. Also, by
definition of $\chi_\beta$, we have $\sum_{k=0}^{\infty} e^{-\beta k} \sum_{\mu\in E^k,\,
g \cdot \mu = \mu} \tau_n(u_{g|_{\mu}}) = Z(\beta, \tau_n) \tau_{n+1}(u_g)$. Making these
substitutions in~\eqref{eq:F recurrence}, we obtain
\begin{align*}
\Delta_{n+1}(g)
    &= \big(Z(\beta,\tau_n)^{-1} - \rho(A_{vN})^{-1}) Z(\beta, \tau_n) \tau_{n+1}(u_g)\\
        &\hskip5em{}+ \rho(A_{vN})^{-1} \sum^\infty_{k=0} e^{-\beta k} \sum_{\mu \in E^k,\,g\cdot\mu = \mu} \Delta_n(g|_{\mu}).
\end{align*}

With $\mathcal{G}^k_g(v)$ and $\mathcal{F}^k_g(v)$ defined as in Lemma~\ref{lem:tech
estimate}, the preceding expression for $\Delta_{n+1}(g)$ becomes
\begin{equation}\label{eq:Fn rewrite}
\begin{split}
\Delta_{n+1}(g)
    &= \big(Z(\beta,\tau_n)^{-1} - \rho(A_{vN})^{-1}) Z(\beta, \tau_n) \tau_{n+1}(u_g)\\
        &\hskip3em{}+ \rho(A_{vN})^{-1} \sum^\infty_{k=0} e^{-\beta k} \sum_{v \in E^0}
            \Big(\sum_{\mu \in \mathcal{G}_g^k(v) \setminus \mathcal{F}_g^k(v)} \Delta_n(g|_{\mu})
                + \sum_{\mu \in \mathcal{F}_g^k(v)} \Delta_n(g|_{\mu})\Big).
\end{split}
\end{equation}

The Cauchy--Schwarz inequality implies that for any $h \in \mathcal{G}$,
\[
|\tau_{n+1}(u_h)|^2 = |\tau_{n+1}(u_h^* u_{t(h)})|^2
    \le \tau_{n+1}(u_h^*u_h) \tau(u_{t(h)}^*u_{t(h)})
    = \tau_{n+1}(u_{d(h)})\tau_{n+1}(u_{t(h)}).
\]
Since our fixed $g$ satisfies $d(g) = t(g)$, taking square roots in the preceding
estimate gives $|\tau_{n+1}(u_{g})| \le \tau_{n+1}(u_{d(g)})$. Applying this combined
with the triangle inequality to the right-hand side of~\eqref{eq:Fn rewrite}, we obtain
\begin{align*}
|\Delta_{n+1}(g)|
    &\le \big|Z(\beta,\tau_n)^{-1} - \rho(A_{vN})^{-1}\big| Z(\beta, \tau_n) \tau_{n+1}(u_{d(g)})\\
        &\hskip3em{}+ \rho(A_{vN})^{-1} \sum^\infty_{k=0} e^{-\beta k} \sum_{v \in E^0}
            \Big(\sum_{\mu \in \mathcal{G}_g^k(v) \setminus \mathcal{F}_g^k(v)} \big|\Delta_n(g|_{\mu})\big|
                + \sum_{\mu \in \mathcal{F}_g^k(v)} \big|\Delta_n(g|_{\mu})\big|\Big),
\end{align*}
which, using that $g|_\mu = v$ for $\mu \in \mathcal{F}_g^k(v)$, becomes
\begin{align*}
|\Delta_{n+1}(g)|
    &\le \big|Z(\beta,\tau_n)^{-1} - \rho(A_{vN})^{-1}\big| Z(\beta, \tau_n) \tau_{n+1}(u_{d(g)})\\
        &\hskip3em{}+ \rho(A_{vN})^{-1} \sum^\infty_{k=0} e^{-\beta k} \sum_{v \in E^0} \sum_{\mu \in \mathcal{G}_g^k(v) \setminus \mathcal{F}_g^k(v)} \big|\Delta_n(g|_{\mu})\big| \\
        &\hskip3em{}+ \rho(A_{vN})^{-1} \sum^\infty_{k=0} e^{-\beta k} \sum_{v \in E^0} \sum_{\mu \in \mathcal{F}_g^k(v)} |\Delta_n(v)|.
\end{align*}
Since $\big(Z(\beta,\tau_n)^{-1} - \rho(A_{vN})^{-1}) Z(\beta, \tau_n) =
\rho(A_{vN})^{-1}\big(\rho(A_{vN}) - Z(\beta,\tau_n)\big)$, we obtain
\begin{align*}
|\Delta_{n+1}(g)|
    &\le \rho(A_{vN})^{-1}\big|\rho(A_{vn}) - Z(\beta,\tau_n)\big| \tau_{n+1}(u_{d(g)})\\
        &\hskip3em{}+ \rho(A_{vN})^{-1} \sum^\infty_{k=0} e^{-\beta k} \sum_{v \in E^0} \sum_{\mu \in \mathcal{G}_g^k(v) \setminus \mathcal{F}_g^k(v)} \big|\Delta_n(g|_{\mu})\big|\\
        &\hskip3em{}+ \rho(A_{vN})^{-1} \sum_{\mu \in d(g)E^*} e^{-\beta|\mu|} |\Delta_n(s(\mu))|.
\end{align*}

By Theorem~\ref{theor:converging_to_PF} there are positive constants $\lambda_0$, $K_1$
and $K_2$ with $\lambda_0 < 1$ such that $|\rho(A_{vN}) - Z(\beta,\tau_n)| <
K_1\lambda_0^n$ for all $n$ and $|\Delta_n(v)| = |\tau_n(u_v) - m_v| < K_2\lambda_0^n$
for all $v \in E^0$ and $n \ge 0$. Thus we obtain
\begin{align*}
|\Delta_{n+1}(g)|
    &\le K_1\lambda_0^n \rho(A_{vN})^{-1} \tau_{n+1}(u_{d(g)})\\
        &\hskip3em{}+ \rho(A_{vN})^{-1} \sum^\infty_{k=0} e^{-\beta k} \sum_{v \in E^0}
            \sum_{\mu \in \mathcal{G}_g^k(v) \setminus \mathcal{F}_g^k(v)} \big|\Delta_n(g|_{\mu})\big|\\
        &\hskip3em{}+ K_2\lambda_0^n \rho(A_{vN})^{-1} \sum_{\mu \in d(g)E^*} e^{-\beta|\mu|}.
\end{align*}

Theorem~3.1(a) of \cite{aHLRS1} shows that $\sum_{\mu \in d(g)E^*} e^{-\beta|\mu|}$
converges, and since the entries of the Perron--Frobenius eigenvector $m$ are strictly
positive, $l := \max_v m_v^{-1}$ is finite. So $K := 2 l \rho(A_{vN})^{-1} \max\{K_1,
K_2\sum_{\mu \in E^*} e^{-\beta|\mu|}\}$ satisfies
\begin{equation}\label{eq:last estimate}
|\Delta_{n+1}(g)|
    \le K\lambda_0^n m_{d(g)}
        + \rho(A_{vN})^{-1} \sum^\infty_{k=0} e^{-\beta k} \sum_{v \in E^0} \sum_{\mu \in \mathcal{G}_g^k(v) \setminus \mathcal{F}_g^k(v)} \big|\Delta_n(g|_{\mu})\big|.
\end{equation}

Since both $\lambda_0$ and the constant $\alpha$ of~\eqref{eq:alpha estimate} are less
than 1, the quantity $\lambda := \max\{\lambda_0, \alpha\}$ is less than $1$.

Let $D := l \max_{\mu \in d(g)E^*} \big(|\tau(u_{g|_\mu})| + |\theta(u_{g|_\mu})|\big)$,
which is finite because $\mathcal{G} \curvearrowright E$ is finite state. Let $g|_{E^*}
:= \{g|_\mu : \mu \in E^*\} \subseteq \mathcal{G}$. We will prove by induction that
$|\Delta_n(h)| \le (nK + D)\lambda^{n-1} m_{d(h)}$ for all $n$ and for all $h \in
g|_{E^*}$. The base case $n = 0$ is trivial because each $|\Delta_0(h)| = |\tau(u_h) -
\theta(u_h)| \le |\tau(u_h)| + |\theta(u_h)| \le D l^{-1} \le \lambda^{-1} D m_{d(h)}$.
Now suppose as an inductive hypothesis that $|\Delta_n(h)| \le (nK + D)\lambda^{n-1}
m_{d(h)}$ for all $h \in g|_{E^*}$. Fix $h \in g|_{E^*}$. Applying the inductive
hypothesis on the right-hand side of~\eqref{eq:last estimate}, and then using that
$h|_{E^*} \subseteq g|_{E^*}$ and invoking~\eqref{eq:alpha estimate} gives
\begin{align*}
|\Delta_{n+1}(h)|
    &\le K\lambda_0^n m_{d(h)}
        + (nK + D)\lambda^{n-1} \rho(A_{vN})^{-1} \sum^\infty_{k=0} e^{-\beta k}
            \sum_{v \in E^0} \sum_{\nu \in \mathcal{G}_{h}^k(v) \setminus \mathcal{F}_{h}^k(v)} m_{d(h|_\nu)}\\
    &= K\lambda_0^n m_{d(h)}
        + (nK + D)\lambda^{n-1} \rho(A_{vN})^{-1} \sum^\infty_{k=0} e^{-\beta k} \sum_{v \in E^0} |\mathcal{G}_h^k(v) \setminus \mathcal{F}_h^k(v)| m_v\\
    &\le K\lambda_0^n m_{d(h)} + (nK + D)\lambda^{n-1} \alpha m_{d(h)},
\end{align*}
and since $\lambda_0, \alpha < \lambda$ we deduce that
\[
|\Delta_{n+1}(h)| \le ((n+1)K + D)\lambda^n m_{d(h)}.
\]
The claim follows by induction. In particular we have $|\Delta_n(g)| \le (nK +
D)\lambda^{n-1} m_{d(g)}$ for all $n$ as claimed. This proves the first statement.

The second statement follows immediately from Lemma~\ref{lem:chi fp}.
\end{proof}

\begin{proof}[Proof of Theorem~\ref{thm:main}]
(1) Let $m = m^E$ be the Perron--Frobenius eigenvector of $A_E$. For $v \in
\mathcal{G}^{(0)} = E^0$, let $c_v := m_v$. Fix $g \in \mathcal{G} \setminus E^0$. By
\cite[Proposition~8.2]{LRRW2}, the sequence
\begin{equation*}
 \Big(\rho(A_E)^{-n}
    \sum_{v \in E^0} \big|\{ \mu \in E^n : g \cdot \mu = \mu, g|_{\mu} = v \}\big| m_v\Big)_{n=1}^\infty
\end{equation*}
converges to some $c_g \in [0, m_{d(g)}]$. By \cite[Theorem~8.3]{LRRW2}, there is a
KMS$_{\log\rho(A_E)}$-state $\psi$ of $\mathcal{T}(\mathcal{G}, E)$ that factors through
$\mathcal{O}(\mathcal{G}, E)$. This $\psi$ satisfies
\[
\psi(s_{\mu} u_g s_{\nu}^*) = \begin{cases}
    \rho(A_E)^{-|\mu|} c_g &\text{ if $\mu = \nu$ and $d(g) = t(g) = s(\mu)$}\\
    0 &\text{ otherwise.}
    \end{cases}
\]
In particular, $\theta := \psi|_{C^*(\mathcal{G})}$ belongs to $\Tr(C^*(\mathcal{G}))$.

We claim that $\theta$ is a fixed point for $\chi_\beta$. By the final statement of
Theorem~\ref{limit_trace}, it suffices to show that $\theta$
satisfies~\eqref{recursive_2}. Proposition~8.1 of \cite{LRRW2} shows that $x^\theta =
\big(\theta(u_v)\big)_{v \in E^0}$ is equal to $m$. Using this, we see that
\begin{align*}
Z(\beta, \theta)
    &= \sum_{v \in E^0} \sum^\infty_{k=0} e^{-k\beta} \sum_{\mu \in vE^k} \theta(s(\mu))
    = \Big\|\sum^\infty_{k=0} (e^{-k\beta}A_E^k x)\Big\|_1\\
    &= \Big\|\sum^\infty_{k=0} (e^{-k\beta}\rho(A_E)^k) x\Big\|_1
    = (1 - e^{-\beta}\rho(A_E))^{-1}.
\end{align*}
Hence $N(\beta, \theta) = \rho(A_E)$.

Since $1_{\mathcal{O}(\mathcal{G},E)} = \sum_{v \in E^0} p_v = \sum_{e \in E^1} s_e
s_e^*$, we have
\begin{align*}
\theta(u_g)
    &= \psi(u_g)
     = \sum_{e \in E^1} \psi( u_g s_e s_e^*)
     = \sum_{e \in E^1} \delta_{d(g),r(e)} \psi(s_{g \cdot e} u_{g|_e} s_e^*) \\
    &= \sum_{e \in E^1} \delta_{d(g),r(e)} \delta_{g \cdot e,e} \delta_{d(g|_e),s(e)} \delta_{t(g|_e),s(e)} \rho(A_E)^{-1} \theta(u_{g|_e})
     = N(\beta,\theta)^{-1} \sum_{e\in E^1,\, g \cdot e = e} \theta(u_{g|_e}).
\end{align*}
Now an easy induction shows that $\theta$ satisfies relation~\eqref{recursive_2}.

It remains to prove that $\theta$ is the unique fixed point for $\chi_\beta$. For this,
suppose that $\theta'$ is a fixed point for $\chi_\beta$, so $\theta' = \lim^{w*}_n
\chi_\beta^n(\theta')$. Since $\theta$ satisfies~\eqref{recursive_2},
Theorem~\ref{limit_trace} shows that $\lim^{w*}_n \chi_\beta^n(\theta') = \theta$. So
$\theta' = \theta$.

(2) This follows immediately from Theorem~\ref{limit_trace} because $\theta$
satisfies~\eqref{recursive_2}.

(3) The trace $\theta$ of part~(1) extends to a KMS$_{\log\rho(A_E)}$ state of
$\mathcal{T}(\mathcal{G}, E)$ by construction. If $\phi$ is any
KMS$_{\log\rho(A_E)}$-state of $\mathcal{T}(\mathcal{G}, E)$, then it restricts to a
KMS$_{\log\rho(A_E)}$-state of the subalgebra $\mathcal{T} C^*(E)$, so it follows from
\cite[Theorem~4.3(a)]{aHLRS1} that $\phi$ agrees with $\psi$ on $\mathcal{T} C^*(E)$, and
in particular $(\phi(u_v))_{v \in E^0}$ is equal to the Perron--Frobenius eigenvector
$m^E$. So \cite[Proposition~8.1]{LRRW2} shows that $\phi$ factors through
$\mathcal{O}(\mathcal{G}, E)$. By construction, $\psi$ also factors through
$\mathcal{O}(\mathcal{G}, E)$. By \cite[Theorem~8.3(2)]{LRRW2}, there is a unique KMS
state on $\mathcal{O}(\mathcal{G}, E)$, and we deduce that $\phi = \psi$. In particular,
$\phi|_{C^*(\mathcal{G})} = \psi|_{C^*(\mathcal{G})} = \theta$.
\end{proof}

\subsection*{Acknowledgement} The first-named author thanks the School of Mathematics and
Applied Statistics of the University of Wollongong for the kind hospitality received
during the four months he spent there in September and October 2016 and 2017.


\begin{thebibliography}{9}
\bibitem{AaHR} Z. Afsar, A. an Huef, and I. Raeburn, \emph{KMS states on
    $C^*$-algebras associated to a family of $*$-commuting local homeomorphisms},
    J. Math. Anal. Appl. \textbf{464} (2018), 965--1009.

\bibitem{CL} T.M. Carlsen and N.S. Larsen, \emph{Partial actions and
    {KMS} states on relative graph {$C^*$}-algebras}, J. Funct. Anal. \textbf{271} (2016), 2090--2132.

\bibitem{CT} J. Christensen and K. Thomsen, \emph{Finite
    digraphs and {KMS} states}, J. Math. Anal. Appl. \textbf{433} (2016), 1626--1646.

\bibitem{DunfordSchwartz} N. Dunford and J.T. Schwartz, Linear Operators, Part I,
    Interscience, New York, 1958.

\bibitem{EFW} M. Enomoto, M. Fujii and Y. Watatani, \emph{K{MS} states
    for gauge action on {$O_{A}$}}, Math. Japon. \textbf{29} (1984), 607--619.

\bibitem{FGKP} C. Farsi, E. Gillaspy, S. Kang and J.A. Packer,
    \emph{Separable representations, {KMS} states, and wavelets for higher-rank graphs}, J. Math. Anal. Appl. \textbf{434} (2016), 241--270.

\bibitem{FR} N.J. Fowler, I. Raeburn, \textit{The Toeplitz Algebra of a Hilbert
    Bimodule}, Indiana Univ. Math. J. {\bf 48} (1999), 155--181.

\bibitem{aHLRS1} A. an Huef, M. Laca, I. Raeburn, and A. Sims, \textit{KMS States on the
    $C^*$-Algebras of Finite Graphs}, J. Math. Anal. App. {\bf 405} (2013), 388--399.

\bibitem{aHLRS4} A. an Huef, M. Laca, I. Raeburn and A. Sims, \emph{K{MS}
    states on the {$C^*$}-algebra of a higher-rank graph and periodicity in the path space}, J. Funct. Anal. \textbf{268} (2015), 1840--1875.

\bibitem{IK} M. Ionescu and A. Kumjian, \emph{Hausdorff Measures and KMS
    States}, Indiana Univ. Math. J., \textbf{62} (2013), 443--463.

\bibitem{Kak} E.T.A. Kakariadis, \emph{K{MS} states on {P}imsner algebras
    associated with {$C^*$}-dynamical systems}, J. Funct. Anal. \textbf{269} (2015),
    325--354.

\bibitem{LRRW} M. Laca, I. Raeburn, J. Ramagge, and M.F. Whittaker, \textit{Equilibrium
    States on the Cuntz--Pimsner Algebras of Self-Similar Actions}, J. Funct. Anal. {\bf
    266} (2014), 6619--6661.

\bibitem{LRRW2} M. Laca, I. Raeburn, J. Ramagge, and M.F. Whittaker, \textit{Equilibrium
    states on operator algebras associated to self-similar actions of groupoids on
    graphs}, Adv. Math. \textbf{331} (2018), 268--325.

\bibitem{Nek} V. Nekrashevych, Self-similar groups, \emph{Math. Surv. Mono.} {\bf 117}
    (2005), 231 pp.

\bibitem{Ped} G. K. Pedersen, $C^*$-Algebras and Their Automorphism Groups, \emph{Lon.
    Math. Soc.} {\bf 14} (1979), 416 pp.

\bibitem{Sen} E. Seneta, Non-negative Matrices and Markov Chains, Springer, New
    York, NY (1981), 288 pp.

\bibitem{Tho} K. Thomsen, \emph{K{MS} weights on graph {$C^*$}-algebras}, Adv.
    Math. \textbf{309} (2017), 334--391.
\end{thebibliography}
\end{document}